\def\input@path{{figures/}}\makeatother
\let\OLDthebibliography\thebibliography
\renewcommand\thebibliography[1]{
  \OLDthebibliography{#1}
  \setlength{\parskip}{2.2pt}
  \setlength{\itemsep}{0pt plus 0.3ex}
}
\DeclareFontFamily{LYG}{ygoth}{}
\DeclareFontShape{LYG}{ygoth}{m}{n}{<->ygoth}{}
\DeclareFontFamily{LY}{yfrak}{}
\DeclareFontShape{LY}{yfrak}{m}{n}{<->yfrak}{}
\DeclareFontFamily{LY}{ysmfrak}{}
\DeclareFontShape{LY}{ysmfrak}{m}{n}{<->ysmfrak}{}
\DeclareFontFamily{LY}{yswab}{}
\DeclareFontShape{LY}{yswab}{m}{n}{<->yswab}{}
\newtheorem{thmUniv}{Theorem}
\newtheorem{theorem}{Theorem}[section]
\newtheorem{corollary}[theorem]{Corollary}
\newtheorem{proposition}[theorem]{Proposition}
\newtheorem{lemma}[theorem]{Lemma}
\newtheorem{conjecture}[theorem]{Conjecture}
\newtheorem*{theorem*}{Theorem}
\theoremstyle{definition}
\newtheorem{definition}[theorem]{Definition}
\newtheorem{example}[theorem]{Example}
\newtheorem{remark}[theorem]{Remark}
\crefname{equation}{Equation}{Equations}
\newcommand{\R}{\mathbb{R}} 
\renewcommand{\c}[1]{{\mathcal{#1}}} 
\renewcommand{\b}[1]{{\boldsymbol{#1}}} 
\newcommand{\go}[1]{{\textgoth{#1}}} 
\renewcommand{\emptyset}{\varnothing} 
\renewcommand{\epsilon}{\varepsilon} 
\newcommand{\ssm}{\smallsetminus} 
\newcommand{\one}{{1\!\!1}} 
\newcommandx{\ones}[1][1=n]{\one_{#1}} 
\DeclareMathOperator{\conv}{conv} 
\DeclareMathOperator{\cone}{cone} 
\newcommand{\ie}{\textit{i.e.}~} 
\newcommand{\eg}{\textit{e.g.}~} 
\definecolor{darkblue}{rgb}{0,0,0.7} 
\definecolor{green}{RGB}{57,181,74} 
\definecolor{violet}{RGB}{147,39,143} 
\newcommand{\darkblue}{\color{darkblue}} 
\newcommand{\defn}[1]{\textsl{\darkblue #1}} 
\def\part{\@startsection{part}{1}%
\z@{.7\linespacing\@plus\linespacing}{.8\linespacing}%
{\LARGE\sffamily\centering}}
\newcommandx{\restrG}[2][1=\mu,2=G]{{#2}\left|_{#1}\right.}
\newcommandx{\contrG}[2][1=\mu,2=G]{{#2}\left/_{#1}\right.}
\newcommand{\polytope}[1]{\mathsf{#1}}
\newcommand{\polytopeP}{\mathsf{P}}
\newcommand{\polytopeQ}{\mathsf{Q}}
\newcommand{\polytopeF}{\mathsf{F}}
\newcommand{\polytopeH}{\mathsf{H}}
\newcommand{\polytopeC}{\mathsf{C}}
\newcommandx{\PivotPolytope}[2][1=d,2=\t]{\polytope{\Pi}_{#2}^{#1}}
\newcommandx{\PP}{\polytope{\Pi}}
\newcommand\cyc{\polytope{Cyc}}
\newcommandx{\HypSimpl}[2][1=n,2=k]{\polytope{\Delta}(#1,#2)}
\newcommandx{\MPP}[2][1=\polytopeP,2=\b c]{\polytope{M}_{#2}(#1)}
\newcommandx{\MPPHypSimpl}[2][1=n,2=k]{\polytope{M}(#1,#2)}
\newcommand{\DCe}[1][\polytopeP]{\mathbb{DC}_e(#1)}
\newcommand{\DC}[1][\polytopeP]{\mathbb{DC}(#1)}
\newcommand{\DCZG}[1][G]{\mathbb{DC}(\polytope{Z}_{#1})}
\newcommandx{\ZG}[1][1=G]{\mathsf{Z}_{#1}}
\newcommandx{\Asso}[2][1=n,2={}]{\mathsf{Asso}^{#2}(#1)} 
\newcommandx{\dZono}[1][1=\b{h}]{\mathsf{D}_{#1}} 
\newcommand{\simplex}{\polytope{\Delta}} 
\newcommandx{\Fan}[1][1=F]{\mathcal{#1}} 
\newcommandx{\nestedFan}[1][1=\quiver]{\mathcal{F}(#1)} 
\newcommandx{\ray}[1][1=r]{\b{#1}} 
\newcommandx{\rays}[1][1=R]{\b{#1}} 
\newcommandx{\Perm}[1][1=n]{\polytope{Perm}_{#1}}
\newcommandx{\gArr}[1][1=G]{\mathcal{A}_{#1}} 
\newcommandx{\gFan}[1][1=G]{\Fan_{#1}} 
\newcommandx{\gFanO}[1][1=G]{\widehat{\Fan}_{#1}} 
\newcommandx{\cc}[1][1=G]{\mathbb{K}_{#1}} 
\newcommandx{\braid}[1][1=n]{\mathcal{B}_{#1}} 
\newcommandx{\sbraid}[1][1=n]{\widehat{\mathcal{B}}_{#1}} 
\newcommandx{\coefficient}[3][1={\b{s}}, 2=\b{r}, 3=\b{r}']{\alpha_{#2,#3}(#1)} 
\newcommandx{\virtualPolytopes}[1][1=d]{\mathbb{V}^{#1}} 
\newcommandx{\VDP}[1][1=n]{\mathbb{VDP}^{#1}} 
\newcommandx{\CVDP}[1][1=n]{\overrightarrow{\mathbb{VDP}}^{#1}} 
\newcommand{\VD}[1][1=n]{\mathbb{VD}} 
\newcommandx{\opcone}[1][1={\mu,\omega}]{\polytope{C}_{#1}}
\newcommandx{\orcone}[1][1={\omega}]{\polytope{C}_{#1}}
\newcommandx{\graphG}[1][1=G]{#1} 
\newcommandx{\hypergraph}[1][1=H]{\graphG[#1]} 
\newcommandx{\tube}[1][1=t]{\mathsf{#1}} 
\newcommandx{\tubes}[1][1=\graphG]{\building#1} 
\newcommandx{\tubing}[1][1=T]{\mathsf{#1}} 
\newcommand{\building}{\mathcal{B}} 
\newcommandx{\nested}[1][1=N]{\mathcal{#1}} 
\newcommandx{\enhancedStep}[3][1=i, 2=j, 3=a]{#1 \xrightarrow{#3} #2}
\newcommandx{\step}[2][1=i, 2=j]{#1 \rightarrow #2}
\newcommandx{\enhancedStepx}[3][1=x, 2=y, 3=z]{#1 \xrightarrow{#3} #2}
\newcommandx{\enhancedStepZ}[3][1=x, 2=y, 3=Z]{#1 \xrightarrow{#3} #2}
\renewcommand\t{\b t}%
\newcommandx{\ConstrainedMALIntrinsic}[2][1=n,2=d]{\go M_{#1,#2}}
\newcommandx{\AssGraph}[2][1=n,2=3]{\mathit{Asso}_{#1}^{#2}}
\newcommandx{\Fpolytope}[3][1=d,2=A,3=\b t]{\polytopeP_{#1}^f\left(#2,#3 \right)}
\newcommandx{\Bpolytope}[3][1=d,2=A,3=\b t]{\polytopeP_{#1}^b \left(#2,#3 \right)}
\newcommandx{\FibPol}[3][1=\polytopeP,2=\polytopeQ,3=\pi]{\polytope{\Sigma}_{#3}(#1,#2)}
\newcommandx{\FibPolCyc}[2][1=d,2=\t]{\polytope{\Sigma}^{#1}_{2}(#2)}
\newcommandx{\PolProj}[3][1=\polytopeP,2=\polytopeQ,3=\pi]{#3~:~#1\to #2}
\newcommandx{\HOmega}[3][1=\kappa,2=\t,3=d]{\Omega_{\kappa}^d(\t)}
\newcommandx{\rHOmega}[3][1=\kappa,2=\t,3=d]{\overline{\Omega}_{\kappa}^d(\t)}
\newcommandx{\Ppolytope}[3][1=d,2=T,3=\t]{\polytopeQ^+_{#1}(#2,#3)}
\newcommandx{\Npolytope}[3][1=d,2=T,3=\t]{\polytopeQ^-_{#1}(#2,#3)}
\newcommandx{\gVdM}[3][1=n,2=k,3=\b \lambda]{\text{VdM}_{#1,#2}(#3)}
\newcommandx{\VdM}[2][1=n,2=\b\lambda]{\text{VdM}_{#1}(#2)}
\newcommand{\inner}[1]{\left<#1\right>}
\title{Deformation cones of graphical zonotopes for $K_4$-free graphs}
\author{Germain POULLOT}
\date{}
\begin{document}

\maketitle

\begin{abstract}
In this paper, we compute a triangulation of certain faces of the submodular cone.
More precisely, graphical zonotopes are generalized permutahedra, and hence their deformation cones are faces of the submodular cone.
We give a triangulation of these faces for graphs without induced complete sub-graph on 4 vertices.
We deduce the rays of these faces: Minkowski indecomposable deformations of these graphical zonotopes are segments and triangles.

Besides, computer experiments lead to examples of graphs without induced complete sub-graph on 5 vertices, whose graphical zonotopes have high dimensional Minkowski indecomposable deformations.
\end{abstract}

In this paper, $\R^d$ is the Euclidean space of dimension $d$, endowed with its scalar product $\inner{\cdot,\cdot}$ and canonical basis $(\b e_1, \dots, \b e_d)$.
For $X\subseteq[d]$, we have $\b e_X = \sum_{i\in X} \b e_i$, and $\simplex_X = \conv\bigl(\b e_i ~;~ i\in X\bigr)$.
For a polytope $\polytopeP$ and $\lambda\in \R$, we denote $\lambda\polytopeP = \{\lambda\b x ~;~ \b x \in \polytopeP\}$, especially when $\lambda = -1$.

\vspace{-0.25cm}

\section*{Acknowledgments}
The author wants to deeply thank two persons.
First, Vic Reiner, during the defense of PhD thesis of the author, asked if something could be said of the dimensions of the rays of the deformation cone of a graphical zonotope when the graph has triangles.
The main theorem of this paper answers this question.
Besides, Vic Reiner asked the author to compute \Cref{exmp:counter-example1} whose result is the center of \Cref{sec:High_dimensional_summands}.
Second, Arnau Padrol, took time in Oberwolfach to listen to the speculations of the author, and proposed to focus on edge-length deformations instead of height deformations.
This change of perspective was the missing keystone for assembling all the ideas together.

Thanks also go to Martina Juhnke for her re-reading, to the MF Oberwolfach for its benevolent atmosphere, and to SageMath without which no conjecture would have been made in the first place.

Computer experiments were done with Sage 9.1 \cite{Sage}, building my own code for deformation cones.
Since, in Sage 10.6, a direct method was introduced to compute deformation cones.

\vspace{-0.25cm}

\tableofcontents


\section{Introduction}

Originally introduced by Edmonds in 1970 under the name of \defn{polymatroids} as a polyhedral generalization of matroids in the context of linear optimization~\cite{Edmonds}, the \defn{generalized permutahedra} were rediscovered by Postnikov in 2009~\cite{Postnikov2009}, who initiated the investigation of their rich combinatorial structure. They have since become a widely studied family of polytopes that appears naturally in several areas of mathematics, such as algebraic combinatorics~\cite{PostnikovReinerWilliams,ArdilaBenedettiDoker,AguiarArdila}, optimization~\cite{SubmodularFunctionsOptimization},  game theory~\cite{DanilovKoshevoy2000}, statistics~\cite{MortonPachterShiuSturmfelsWienand2009,MohammadiUhlerWangYu2018}, and economic theory~\cite{JoswigKlimmSpitz2022}. The set of deformed permutahedra can be parameterized by the cone of \defn{submodular functions}~\cite{Edmonds,Postnikov2009}, and is hence called the \defn{submodular cone}.

The search for irredundant facet descriptions of deformation cones of particular families of combinatorial polytopes has received considerable attention recently, leading to powerful results such as~\cite{CastilloLiu2020, ACEP-DeformationsCoxeterPermutahedra,  AlbertinPilaudRitter, CastilloDoolittleGoecknerRossYing-MinkowskiSummandCube, PadrolPaluPilaudPlamondon, PPP2023Nesto, PPP2023gZono, BazierMatteDouvilleMousavandThomasYildirim}.
One of the motivations sparking this interest arises from the \defn{amplituhedron program} to study scattering amplitudes in mathematical physics~\cite{ArkaniHamedTrnka-Amplituhedron}. As described in \cite[Sec.~1.4]{PadrolPaluPilaudPlamondon}, the deformation cone provides canonical realizations of a polytope (seen as a \defn{positive geometry}~\cite{ArkaniHamedBaiLam-PositiveGeometries}) in the positive region of the kinematic space, akin to those of the associahedron in~\cite{ArkaniHamedBaiHeYan}.

However, faces of the submodular cone are far from being well understood: determining its rays for instance remains an open problem since the 1970s~\cite{Edmonds}.
In this paper, we will describe a triangulation of certain faces of the submodular cone, and deduce their rays and faces of dimension~2.
These faces are in one-to-one correspondence with graphical zonotopes for graphs with no induced complete graph on 4 vertices, see \Cref{sec:K4_free_graphs}:

\begin{thmUniv}[\Cref{thm:DCZG_K4-free_graphs,cor:DCZG_graph_is_bi-pyramid_graph}]
Let $G$ be a $K_4$-free graph, $E$ its set of arcs and $T$ its set of triangles.
The deformation cone $\DCZG$ of the graphical zonotope $\ZG = \sum_{\{i, j\}\in E} [\b e_i, \b e_j]$ has dimension $|E| + |T|$, and is triangulated by the $2^{|T|}$ simplicial cones defined in \Cref{def:SimplicialConesForTriangulation}.

The cone $\DCZG$ has $|E| + 2|T|$ rays, associated to the segments $\simplex_e$ for $e\in E$, and the triangles $\simplex_t$ and $-\simplex_t$ for $t\in T$.
Moreover, the cone $\DCZG$ has $\binom{|E|+2|T|}{2} - |T|$ faces of dimension 2.
\end{thmUniv}

Furthermore, in \Cref{sec:High_dimensional_summands}, we show that such a theorem does not hold for graphs containing a complete graph on 4 vertices (even for some $K_5$-free graphs), see \Cref{exmp:counter-example1}.

\section{Preliminaries}
Before being able to state our main results, we will need to introduce deformations (\Cref{ssec:Deformations_intro}) and graphical zonotopes (\Cref{ssec:ZG_intro}), and to address the cases of 2-dimensional graphical zonotopes (\Cref{ssec:2d_ZG}) and graphical zonotopes for triangle-free graphs (\Cref{ssec:Triangle_free}).

\subsection{Deformation cone and edge deformations}\label{ssec:Deformations_intro}

There are a lot of ways to define deformations of polytopes.
The aim is to study how one can write a polytope as a \defn{Minkowski sum} of other polytopes: $\polytopeQ  + \polytope{R} := \{\b q + \b r ~;~ \b q\in \polytopeQ,~ \b r\in \polytope{R}\}$.
A deformation of a polytope $\polytopeP$ can be defined as a \defn{weak Minkowski summand} of $\polytopeP$, that is to say a polytope $\polytopeQ$ such that there exists $\lambda > 0$ and a polytope $\polytope{R}$ with $\lambda \polytopeP = \polytopeQ + \polytope{R}$.
Though theoretically important, this definition is unhandy and we will prefer a more geometric perspective.
For a face $\polytopeF$ of a polytope $\polytopeP$, we denote by $\c N_\polytopeP(\polytopeF)$ the outer normal cone of $\polytopeF$, \ie $\c N_\polytopeP(\polytopeF) = \{\b c\in \R^d ~;~ \polytopeP^{\b c} = \polytopeF\}$ where $\polytopeP^{\b c} = \{\b x\in \polytopeP ~;~ \inner{\b x, \b c} = \max_{\b y\in \polytopeP} \inner{\b y, \b c}\}$.
The \defn{normal fan} of $\polytopeP$ is then the collection of its normal cones: $\c N_\polytopeP := \bigl(\c N_\polytopeP(\polytopeF) ~;~ \polytopeF \text{ face of } \polytopeP\bigr)$.
The normal fan contains combinatorial and geometric information.
In particular, if $\c N_\polytopeP = \c N_\polytopeQ$, then $\polytopeP$ and $\polytopeQ$ are combinatorially isomorphic, but the converse is false.
A fan $\c F$ \defn{coarsens} a fan $\c G$, denoted $\c F\trianglelefteq \c G$, if all the cones of $\c G$ are contained in cones of $\c F$.
As $\c N_{\polytopeQ + \polytope{R}}$ is the common refinement of $\c N_\polytopeQ$ and $\c N_{\polytope{R}}$, if $\polytopeQ$ is a weak Minkowski summand of $\polytopeP$, then $\c N_\polytopeQ$ coarsens $\c N_\polytopeP$.
The converse can also be shown, motivating to the following definition.

\begin{definition}
A \defn{deformation} (or \defn{weak Minkowski summand}) of a polytope $\polytopeP$ is a polytope $\polytopeQ$ such that $\c N_\polytopeQ$ coarsens $\c N_\polytopeP$.
The \defn{deformation cone} of $\polytopeP$ is:
$$\DC := \bigl\{\polytopeQ ~;~ \c N_\polytopeQ \trianglelefteq \c N_\polytopeP\bigr\}$$
\end{definition}

Note that $\DC$ is indeed a cone because $\c N_{\lambda \polytopeQ} = \c N_\polytopeQ$ for and $\lambda > 0$, and $\c N_{\polytopeQ + \polytope{R}}$ is the common refinement of $\c N_\polytopeQ$ and $\c N_{\polytope{R}}$.
Moreover, if $\polytopeQ$ is a deformation of $\polytopeP$, then so is any translation of $\polytopeQ$: to ease the presentation, we quotient $\DC$ by translation, and keep only one representative per class of translation, \ie we consider $\DC$ without its lineality space.

In order to understand this cone more in more details, we parameterize it.
For the purpose of this paper, we choose to parameterize it as the edge-length deformation cone $\DCe$, presented thereafter.
In \cite[Appendix 15]{PostnikovReinerWilliams}, the authors prove that $\DC$ (without its lineality space) is linearly isomorphic to $\DCe$, and state the following \Cref{thm:polygonal_face_equation}.
We give here a condensed presentation of the main ideas, referring the reader to \cite[Appendix 15]{PostnikovReinerWilliams} for the proofs.

Denoting by $E(\polytopeP)$ the set of edges of a polytope $\polytopeP$, we associate to $\polytopeP$ its \defn{edge-length vector $\b\ell(\polytopeP)\in\R_+^{E(\polytopeP)}$} whose coordinate $\ell(\polytopeP)_e$ is the length of the edge $e\in E(\polytopeP)$.
Conversely, we want to associate a polytope $\polytopeQ_{\b\ell}$ to a (coordinate-wise) positive vector $\b\ell\in\R_+^{E(\polytopeP)}$.
For each edge $e\in E(\polytopeP)$, pick a unit vector $\b u_e$ by arbitrarily choosing an orientation of the edges.
Consider the (un-directed) graph $\Gamma_\polytopeP$ of $\polytopeP$, and fix a vertex $\b v$ of $\polytopeP$, then construct the following polytope \linebreak $\polytopeQ_{\b \ell} = \conv\bigl\{\sum_{e\in\c P} \varepsilon_e^{\c P} \ell_e \b u_e ~~;~ \c P \text{ directed edge-path in } \Gamma_\polytopeP \text{ starting at } \b v\bigr\}$ where $\varepsilon_e^{\c P} = 1$ if the direction of $e$ in $\c P$ agrees with the one chosen for $\b u_e$, and $\varepsilon_e^{\c P} = -1$ else way.
Choosing another vertex $\b v$ of $\polytopeP$ amounts to translating $\polytopeQ_{\b\ell}$, giving rise to a normally equivalent polytope.
The \defn{edge-length deformation cone} of $\polytopeP$ is:
$$\DCe = \Bigl\{\b\ell\in \R_+^{E(\polytopeP)} ~~;~~  \c N_{\polytopeQ_{\b \ell}} \trianglelefteq \c N_\polytopeP\Bigr\}$$

There is an efficient way to check if some positive vector $\b\ell\in\R_+^{E(\polytopeP)}$ gives rise to a deformation of~$\polytopeP$.
Indeed, let $\c F_k(\polytopeP)$ be the set of $k$-dimensional faces of $\polytopeP$.
For each 2-dimensional face $\polytopeF\in\c F_2(\polytopeP)$ and $e\in E(\polytopeF)$, let $\b n_e^\polytopeF$ the outer normal unit vector of $e$ in the plane $\text{aff}(\polytopeF)$.
Then the \defn{polygonal face equation} given by $\polytopeF$ is: $\sum_{e\in E(\polytopeF)} \ell_e \b n^\polytopeF_e = \b 0$.

\begin{theorem}\label{thm:polygonal_face_equation}\emph{(\cite[Theorem 15.3]{PostnikovReinerWilliams}).}
For a polytope $\polytopeP$, the edge-length deformation cone is the intersection of $\R_+^{E(\polytopeP)}$ with (the kernel of) the polygonal face equations:
$$\DCe = \R_+^{E(\polytopeP)} \cap \bigcap_{\polytopeF\in \c F_2(\polytopeP)} \bigl\{\b\ell ~;~ \sum_{e\in E(\polytopeF)} \ell_e \b n^{\polytopeF}_e = \b 0\bigr\}$$

Moreover, $\DCe$ is linearly isomorphic to $\DC$ without its lineality space.
\end{theorem}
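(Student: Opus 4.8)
The plan is to prove the two assertions separately: first that, inside the positive orthant $\R_+^{E(\polytopeP)}$, the condition $\c N_{\polytopeQ_{\b\ell}}\trianglelefteq\c N_\polytopeP$ is equivalent to the vanishing of all polygonal face equations; and second that the resulting assignment $\b\ell\mapsto\polytopeQ_{\b\ell}$ furnishes a linear isomorphism onto $\DC$ modulo translations. The guiding principle is that the position of the vertex of $\polytopeQ_{\b\ell}$ reached by a path $\c P$ from the base vertex $\b v$ is read off by transporting the signed edge-vectors $\varepsilon_e^{\c P}\ell_e\b u_e$ along $\c P$, so the whole question reduces to understanding when this transport is path-independent.

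For the first assertion I would argue as follows. Fix a $2$-face $\polytopeF$, a polygon whose edges carry the directions $\b u_e$. In $\polytopeQ_{\b\ell}$ the boundary of $\polytopeF$ is traversed by the vectors $\pm\ell_e\b u_e$, and this broken line closes up exactly when $\sum_{e\in E(\polytopeF)}\varepsilon_e\ell_e\b u_e=\b 0$; applying the planar rotation by a right angle that sends each edge direction to its outer normal $\b n_e^\polytopeF$ converts this into precisely the polygonal face equation $\sum_{e\in E(\polytopeF)}\ell_e\b n_e^\polytopeF=\b 0$. This already gives necessity: if $\polytopeQ_{\b\ell}$ is a deformation then every $2$-face must close, so every polygonal face equation holds. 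For sufficiency I would invoke the topology of $\partial\polytopeP$: for $\dim\polytopeP\ge 3$ the boundary is a sphere, hence its $2$-skeleton is simply connected, so the cycle space of the edge graph $\Gamma_\polytopeP$ is generated by the boundaries of the $2$-faces. Consequently, once the transport closes around every $2$-face it closes around every cycle, the vertex positions $\phi(\b w):=\sum_{e\in\c P}\varepsilon_e^{\c P}\ell_e\b u_e$ are well defined (independent of the path $\c P$ from $\b v$ to $\b w$), and one checks with the help of the positivity $\b\ell\ge\b 0$ that for each $\b c$ the maximizing face $\polytopeQ_{\b\ell}^{\b c}$ is the $\phi$-image of $\polytopeP^{\b c}$; this is exactly the statement $\c N_{\polytopeQ_{\b\ell}}\trianglelefteq\c N_\polytopeP$.

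For the second assertion I would exhibit mutually inverse linear maps. The assignment $\b\ell\mapsto\polytopeQ_{\b\ell}$ is linear for Minkowski addition, since $\phi$ depends linearly on $\b\ell$ and the common-refinement rule for normal fans yields $\polytopeQ_{\b\ell+\b\ell'}=\polytopeQ_{\b\ell}+\polytopeQ_{\b\ell'}$. In the other direction, any deformation $\polytopeQ$ has each edge parallel to some $\b u_e$, because under coarsening of the normal fan a wall can only disappear (an edge collapses) while surviving walls keep their supporting hyperplanes (edges keep their directions); recording these lengths defines $\polytopeQ\mapsto\b\ell(\polytopeQ)\in\R_+^{E(\polytopeP)}$, again linearly. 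Path-independence then shows $\polytopeQ_{\b\ell(\polytopeQ)}=\polytopeQ$ up to translation and $\b\ell(\polytopeQ_{\b\ell})=\b\ell$, so the two maps are inverse bijections between $\DCe$ and $\DC$ taken modulo its lineal, and they are linear isomorphisms.

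The delicate point, and the one I would spend the most care on, is the sufficiency half of the first assertion. Equating ``all $2$-faces close'' with ``all cycles close'' is clean only because of the simple connectivity of the $2$-skeleton, and this must be handled honestly, including the low-dimensional base cases and the fact that collapsing edges (where $\ell_e=0$) do not break path-independence. Equally, upgrading the well-defined vertex map $\phi$ to the fan-coarsening statement requires positivity to rule out edge reversals, which is exactly where the orthant constraint $\R_+^{E(\polytopeP)}$ enters: without it the polygonal equations alone would cut out a larger linear space rather than the deformation cone itself.
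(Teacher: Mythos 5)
Your proposal is for a statement the paper itself never proves: \Cref{thm:polygonal_face_equation} is imported from Postnikov--Reiner--Williams, and the text explicitly defers to their Appendix~15 for the proofs, offering only ``a condensed presentation of the main ideas''. So there is no in-paper proof to compare against; your attempt has to be judged against the standard argument, which is essentially what you reconstruct.

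On its merits, the outline is correct and identifies the genuinely delicate points. Your right-angle-rotation identification of the closing condition $\sum_{e} \varepsilon_e \ell_e \b u_e = \b 0$ with the polygonal face equation $\sum_{e} \ell_e \b n_e^{\polytopeF} = \b 0$ is exactly the equivalence the paper records in its remark on the ``dual'' polygonal face equations. Your sufficiency argument --- simple connectivity of the $2$-skeleton of $\partial\polytopeP$ for $\dim\polytopeP \ge 3$ (with the low-dimensional cases handled separately) forces every cycle sum to vanish once every $2$-face closes, so the vertex map $\phi$ is well defined --- is the standard route, and your linearity/inverse-map argument for the isomorphism with $\DC$ modulo its lineal (non-collapsed edges of a deformation keep their directions, so recording lengths is a linear inverse) is also the standard one. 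The only step you assert rather than carry out is the upgrade from ``$\phi$ is well defined'' to the coarsening $\c N_{\polytopeQ_{\b\ell}} \trianglelefteq \c N_\polytopeP$: the usual completion takes a generic $\b c$, a $\b c$-monotone edge path in $\Gamma_\polytopeP$ from an arbitrary vertex $\b w$ to the $\b c$-maximizing vertex of $\polytopeP$, and uses $\ell_e \ge 0$ to see that each step changes $\inner{\b c, \phi(\cdot)}$ by a nonnegative amount, so that the $\phi$-image of the maximizer of $\polytopeP$ maximizes $\b c$ over $\polytopeQ_{\b\ell}$; this is precisely where the orthant constraint enters, as you correctly flag. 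With that detail written out, your sketch would be a complete proof of the cited theorem --- which is more than the paper itself provides.
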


\begin{remark}
This description makes it clear that the (edge-length) deformation cone is a polyhedral cone, but has the drawback that the deformation cone is embedded in very high dimension.
Computer implementations more often uses other parameterizations of the deformation cone, such as the height deformation cone (see \cite{ChapotonFominZelevinsky,GelfandKapranovZelevinsky,PilaudSantos-quotientopes,PadrolPaluPilaudPlamondon,PPP2023Nesto,PPP2023gZono}).
\end{remark}

Last but not least, by construction, the face lattice of $\DC$ is the lattice of classes of normally equivalent deformations of $\polytopeP$.
In particular, the following allows us to study the faces of $\DC$:

\begin{theorem}\label{thm:faces_of_DC_are_DC}
If $\polytopeQ$ is a deformation of $\polytopeP$, then $\DC[\polytopeQ]$ is a face of $\DC$.    
\end{theorem}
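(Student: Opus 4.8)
The plan is to realize $\DC$ as a genuine polyhedral cone and then check $\DC[\polytopeQ]$ against the standard characterization of the faces of a cone. By \Cref{thm:polygonal_face_equation}, the map $\b\ell \mapsto \polytopeQ_{\b\ell}$ is a linear isomorphism from the polyhedral cone $\DCe$ onto $\DC$ (modulo lineality); since the support function of $\polytopeQ_{\b\ell}$ is linear in $\b\ell$, this isomorphism carries vector addition to Minkowski sum and positive scalings to dilations. Thus I may treat $\DC$ as an honest polyhedral cone whose addition is the Minkowski sum $+$, and in which ``face'' has its usual meaning.

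First I would record the inclusion $\DC[\polytopeQ] \subseteq \DC$, which is pure transitivity of the coarsening relation. If $\polytope{R}$ is a deformation of $\polytopeQ$ then $\c N_{\polytope{R}} \trianglelefteq \c N_\polytopeQ$, and since $\polytopeQ$ is a deformation of $\polytopeP$ we have $\c N_\polytopeQ \trianglelefteq \c N_\polytopeP$; chaining these two containments of cones gives $\c N_{\polytope{R}} \trianglelefteq \c N_\polytopeP$, i.e. $\polytope{R} \in \DC$. Hence $\DC[\polytopeQ] = \{\polytope{R} \in \DC ~;~ \c N_{\polytope{R}} \trianglelefteq \c N_\polytopeQ\}$, and this is clearly a subcone: it is stable under dilation, and, because $\c N_{\polytope{R}_1 + \polytope{R}_2}$ is the common refinement of $\c N_{\polytope{R}_1}$ and $\c N_{\polytope{R}_2}$, it is stable under Minkowski sum.

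The core of the argument is the elementary fact that a convex subcone $F$ of a convex cone $C$ is a face of $C$ if and only if it is \emph{extreme}, meaning that for all $\b x, \b y \in C$ the condition $\b x + \b y \in F$ forces both $\b x \in F$ and $\b y \in F$. I would apply this with $C = \DC$ and $F = \DC[\polytopeQ]$. So let $\polytope{R}_1, \polytope{R}_2 \in \DC$ with $\polytope{R}_1 + \polytope{R}_2 \in \DC[\polytopeQ]$, that is $\c N_{\polytope{R}_1 + \polytope{R}_2} \trianglelefteq \c N_\polytopeQ$. Each $\polytope{R}_i$ is a weak Minkowski summand of $\polytope{R}_1 + \polytope{R}_2$, hence $\c N_{\polytope{R}_i} \trianglelefteq \c N_{\polytope{R}_1 + \polytope{R}_2}$; transitivity of $\trianglelefteq$ then yields $\c N_{\polytope{R}_i} \trianglelefteq \c N_\polytopeQ$, i.e. $\polytope{R}_i \in \DC[\polytopeQ]$ for $i = 1, 2$. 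This shows $\DC[\polytopeQ]$ is extreme, and therefore a face of $\DC$.

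The argument is short because all the geometry is packaged into two facts already available: that a Minkowski summand coarsens the normal fan of the sum, and that $\trianglelefteq$ is transitive. The only point demanding real care — and the step I would expect to be the true crux — is the translation between the polytope picture and the cone picture: one must be sure that ``Minkowski sum'' is genuinely the addition of the ambient polyhedral cone (so that the extremality characterization applies verbatim) and that $\DC[\polytopeQ]$ is a nonempty convex subcone. Both are guaranteed by the linear isomorphism of \Cref{thm:polygonal_face_equation}, under which the whole discussion becomes a statement about an honest polyhedral cone sitting inside $\R^{E(\polytopeP)}$.
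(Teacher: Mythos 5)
Your proof is correct, but it follows a genuinely different route from the paper's. The paper also invokes \Cref{thm:polygonal_face_equation}, but then works concretely inside $\DCe$: writing $\polytopeQ = \polytopeQ_{\b\ell}$, it takes the unique face $\polytopeC$ of $\DCe$ whose relative interior contains $\b\ell$ and identifies $\polytopeC$ with $\DCe[\polytopeQ]$ by checking both inclusions against the polygonal face equations of $\polytopeQ$ (every $\b\ell'\in\polytopeC$ satisfies them, and every deformation of $\polytopeQ$ arises from some $\b\ell'\in\polytopeC$). You never touch the face equations: you use the isomorphism only to view $\DC$ as an honest polyhedral cone under Minkowski sum, and then combine two facts --- summands coarsen the normal fan of their sum, and a convex subcone of a polyhedral cone is a face exactly when it is extreme under addition --- with transitivity of $\trianglelefteq$. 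Your version buys robustness: it sidesteps a point the paper's proof glosses over, namely that $E(\polytopeQ)$ is not $E(\polytopeP)$ (Edges collapse under deformation), so identifying the deformations of $\polytopeQ$ with a subset of $\R_+^{E(\polytopeP)}$ deserves a word; and your argument would apply verbatim to any parameterization of $\DC$ as a polyhedral cone, not just the edge-length one. The paper's version buys concreteness: it exhibits exactly which face one obtains (the minimal face of $\DCe$ containing any edge-length vector representing $\polytopeQ$), which is the form later used in the paper's computations. One caveat on your write-up: you state the extremality criterion for an arbitrary convex cone, where it can fail for the usual (exposed) notion of face --- for instance, in the cone over a planar stadium-shaped body, the ray over a point where the circular arc meets the flat side is extreme but not exposed; the criterion is valid here precisely because $\DC$ is polyhedral, which is what \Cref{thm:polygonal_face_equation} supplies, so that hypothesis should be made explicit where you invoke the fact.
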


\begin{proof}
This is mainly equivalent to \cite[Theorem 7]{McMullen-typeCone}, but we give here a self contained proof.

It follows from the edge-length deformation cone description.
As $\polytopeQ = \polytopeQ_{\b\ell}$ for some $\b \ell\in \DCe$, consider the face $\polytopeC$ of $\DCe$ such that $\b\ell$ lies in the interior of $\polytopeC$.
Then all $\b\ell'\in\polytopeC$ respect the polygonal face equations of $\polytopeQ_{\b\ell}$, and thus $\b\ell'\in \DCe[\polytopeQ]$.
Conversely, any deformation of $\polytopeQ$ can be written as $\polytopeQ_{\b \ell'}$ for some $\b \ell'\in\polytopeC$ because $\polytopeC$ respects the polygonal face equations of $\polytopeQ$.
\end{proof}

\begin{remark}
One can define the polygonal face equations in a dual fashion:
for each 2-dimensional face $\polytopeF\in\c F_2(\polytopeP)$ and each $e\in E(\polytopeF)$, fix a unit vector $\b u_e^\polytopeF$ parallel to the direction of $e$ (by choosing an orientation) such that, endowed with this orientation, $\Gamma_\polytopeF$ is a directed cycle.
Then the (dual) polygonal face equation given by $\polytopeF$ is: $\sum_{e\in E(\polytopeF)} \ell_e \b u^\polytopeF_e = \b 0$.

It is not tedious to see that this definition and the above lead to the same system of equations.
However, the outer-normal setting generalizes more easily to higher dimensions:
for a $k$-dimensional face $\polytopeF\in\c F_k(\polytopeP)$, consider $\polytopeF$ as a full-dimensional polytope embedded in its affine hull $\text{aff}(\polytopeF)$, and let $\b n^\polytopeF_{\polytope{G}}$ be the outer normal (unit) vector to the facet ${\polytope{G}}$ of $\polytopeF$.
Then the $k$-face equation given by $\polytopeF$ is: $\sum_{\polytope{G}\text{ facet of } \polytopeF} \ell_{\polytope{G}} \b n^\polytopeF_{\polytope{G}} = \b 0$.
A vector $\b\ell\in \R^{\c F_{k}(\polytopeP)}$ is a Minkowski $k$-weight \cite{McMullen1996} if it respects all $(k+1)$-face equations: deformations are equivalent to positive Minkowski 1-weights.
The reader shall consult \cite{McMullen1996}, especially its Lemma 8.1 for further details on the construction.
\end{remark}

\subsection{Graphical zonotopes}\label{ssec:ZG_intro}

In order to differentiate between graphs an polytopes, we will say that our graphs consists of \defn{nodes} linked together by \defn{arcs}.
All our graphs will have node set $[n] := \{1, \dots, n\}$, if not mentioned otherwise.
For a graph $G$, we denote by $V(G)$ its node set and $E(G)$ its arc set, and abbreviate it by $V$ and $E$ when the context is clear.
Recall that for $X\subseteq[n]$, we have $\b e_X = \sum_{i\in X} \b e_i$.

\begin{definition}
The \defn{graphical zonotope} associated to a graph $G = (V, E)$ is the polytope $\ZG \subset \R^V$ defined as the following Minkowski sum, where $(\b e_i)_{i \in V}$ is the standard basis of $\R^V$:
$$\ZG = \sum_{e\in E} \simplex_e = \sum_{\{i, j\} \in E} [\b e_i, \b e_j]$$
\end{definition}

We need to understand both the combinatorics and the geometry of graphical zonotopes.
We will attempt to give an in depth presentation of graphical zonotopes, the reader can refer to \cite{Stanley2007,OrientedMatroids} for a general presentation of the subject, or to \cite{PPP2023gZono} for another presentation of the deformation cone of graphical zonotopes.

Denoting the collection of connected components of $G$ by \defn{$\b{cc}(G)$}, remark that for any \linebreak $C\in \b{cc}(G)$, we have $\inner{\b x,\b e_C} = |E(C)|$ for all $\b x\in \ZG$.
Hence, the polytope $\ZG$ lives in the intersection of affine hyperplanes $\bigcap_{C\in \b{cc}(G)} \bigl\{\b x\in\R^V ~;~ \inner{\b x,\b e_C} = |E(C)|\bigr\}$.
As $\ZG$ is full dimensional in this space, it follows that: $\dim \ZG = |V(G)| - |\b{cc}(G)|$.

Moreover, if $G$ has two connected components $C_1, C_2 \in \b{cc}(G)$, with $i\in C_1$, $j\in C_2$, then $\ZG$ is normally equivalent to $\ZG[H]$ where $H$ is obtained from $G$ by identifying any node $i\in C_1$ with any node $j\in C_2$.
Indeed, consider the projection $\pi_{i, j} : \R^V \to \R^{V\ssm \{j\}}$ defined by $\pi_{i, j}(\b e_j) = \b e_i$ and $\pi_{i, j}(\b e_k) = \b e_k$ for $k\ne j$.
A quick scribble gives that $\pi_{i, j}(\ZG) = \ZG[H]$.
Therefore, to deal with a non-connected graph, we can always make it connected by ``gluing" its connected components (\ie taking a 1-sum of its connected components).

\vspace{0.2cm}

Given a collection $\mu$ of disjoint subsets of $V$, the \defn{restriction} of $G$ to $\mu$ (or the \defn{sub-graph of $G$ induced on $\mu$}), denoted $\restrG$ is the sub-graph of $G$ whose node set is $\bigsqcup_{X\in\mu} X$ and whose arcs are the arcs $\{i, j\} \in E$ with $i, j \in X$ for some $X\in \mu$.
Besides, the \defn{contraction} of $\mu = (M_1, \dots, M_r)$ in $G$, denoted $\contrG$, is the graph whose nodes are $M_1, \dots, M_r$ and in which there is an arc between $M_a$ and $M_b$ if there exists $i\in M_a$ and $j\in M_b$ with $\{i, j\} \in E$.

A collection of disjoint subsets $\mu$ is a \defn{partition} of $V$ if $V = \bigsqcup_{X\in \mu} X$.
When necessary, we identify any collection of disjoint subsets $\mu$ of $V$ with the partition obtained by completing it with singletons: $\underline{\mu} := \mu \cup \bigl(\{i\} ~;~ i \notin \bigsqcup_{X\in\mu} X\bigr)$.
By convention, if $\mu = (X)$ consists in only one set $X\subseteq V$, then we will write $\restrG[X]$ instead of $\restrG[(X)]$, and $\contrG[X]$ instead of $\contrG[(\underline{X})]$.

\vspace{0.2cm}

An \defn{acyclic orientation} of a graph $G$ is an orientation of each arc of $G$ such that the induced directed graph has no directed cycle.
We denote by \defn{$\c O(G)$} the set of all acyclic orientations of $G$.

An \defn{ordered partition} of a graph $G = (V, E)$ is a pair $(\mu, \rho)$ where $\mu$ is a partition of $V$ and $\rho \in \c O(\contrG)$.
For $u, v\in V$, we write \defn{$u \xrightarrow{\rho} v$} if there exists a directed path, respecting the orientation $\rho$, from the part of $\mu$ containing $u$ to the one containing $v$.
An ordered partition $(\mu, \rho)$ \defn{refines} $(\mu', \rho')$ when every part of $\mu$ is contained in a part of $\mu'$, and $\rho'$ is the orientation on $\contrG[\mu']$ induced by $\rho$.
This refinement order defined a poset structure on the set of ordered partitions.

\begin{proposition}\label{prop:Normal_fan_ZG}\emph{(\cite[Definition 2.5]{Stanley2007}).}
For a graph $G = (V, E)$, the normal fan of the graphical zonotope $\ZG$ is the fan induced by the \defn{graphical hyperplane arrangement} $\c H_G := \bigl(H_{i, j} ~;~ \{i, j\} \in E\bigr)$ with $H_{i, j} := \bigl\{\b x\in \R^V ~;~ x_i = x_j\bigr\}$.

Consequently, the lattice of faces of $\ZG$ is (anti-)isomorphic to the lattice of ordered partitions of $G$.
More precisely, the face associated to the ordered partition $(\mu, \rho)$ is normally equivalent to $\ZG[\restrG]$ and its normal cone is:
$$\polytopeC_{\mu, \rho} := \left\{\b x ~;~ \begin{array}{lcll}
     x_u & = & x_v & \text{if } u, v \text{ are in the same part of } \mu \\
     x_u & \leq & x_v & \text{if } u \xrightarrow{\rho} v
\end{array} \right\}$$
\end{proposition}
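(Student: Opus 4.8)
The plan is to use that $\ZG$ is a zonotope, so its normal fan is the common refinement of the normal fans of its Minkowski summands $\simplex_e = [\b e_i,\b e_j]$. First I would compute the normal fan of a single segment $[\b e_i,\b e_j]$: for a direction $\b c$, the functional $\inner{\cdot,\b c}$ is maximized at $\b e_i$ when $c_i>c_j$, at $\b e_j$ when $c_i<c_j$, and along the whole segment when $c_i=c_j$. Hence $\c N_{[\b e_i,\b e_j]}$ consists of the two closed half-spaces bounded by $H_{i,j}=\{x_i=x_j\}$ together with $H_{i,j}$ itself, i.e.\ it is exactly the fan induced by the single hyperplane $H_{i,j}$. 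Since $\c N_{\polytopeQ+\polytope{R}}$ is the common refinement of $\c N_\polytopeQ$ and $\c N_{\polytope{R}}$ (recalled earlier), iterating over all $e\in E$ shows that $\c N_{\ZG}$ is the common refinement of the fans of the hyperplanes $H_{i,j}$, which is precisely the fan induced by $\c H_G$. This proves the first assertion.

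Second, I would describe the cones of this fan combinatorially. A cone is the closure of a set of directions sharing a fixed sign vector $\sigma\in\{-,0,+\}^E$, where $\sigma_{ij}$ records the sign of $c_i-c_j$. To a realizable $\sigma$ I associate an ordered partition: let $\mu$ be the partition of $V$ whose blocks are the connected components of the subgraph of $G$ keeping only the edges with $\sigma_{ij}=0$ (so $x_u=x_v$ throughout each block), and let $\rho\in\c O(\contrG)$ be the orientation induced by the strict signs. The crucial point is realizability: a sign vector comes from an actual direction $\b c$ if and only if $\rho$ is acyclic — necessity because $\b c$ strictly increases along every directed edge, so no directed cycle can occur, and sufficiency by assigning to the blocks values compatible with a topological sort of $\rho$. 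Reading off the resulting (in)equalities gives exactly $\polytopeC_{\mu,\rho}$, and this sets up the bijection between cones of $\c H_G$ and ordered partitions $(\mu,\rho)$.

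Third, I would identify which face of $\ZG$ each cone is normal to. Choosing $\b c$ in the relative interior of $\polytopeC_{\mu,\rho}$ and maximizing $\inner{\cdot,\b c}$ summand by summand, every segment $[\b e_i,\b e_j]$ with $c_i\ne c_j$ contributes a fixed endpoint, while every segment with $c_i=c_j$ contributes its whole length; the latter are exactly the edges internal to the blocks of $\mu$, that is, the edges of $\restrG$. Hence $\ZG^{\b c}$ is a translate of $\sum_{\{i,j\}\in E(\restrG)}[\b e_i,\b e_j]=\ZG[\restrG]$, giving the claimed normal equivalence. Finally, the lattice statement follows from the standard fact that the map sending a face to its normal cone is inclusion-reversing; matching this with the refinement order on ordered partitions — refining $(\mu,\rho)$ drops equality constraints and enlarges $\polytopeC_{\mu,\rho}$ — is a direct check from the explicit descriptions, and yields the asserted lattice (anti-)isomorphism.

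I expect the main obstacle to be the realizability bookkeeping in the second step: one must argue cleanly that the admissible sign vectors are precisely those whose induced orientation $\rho$ is acyclic and whose equality classes induce connected subgraphs of $G$ (a disconnected block would impose an equality not cut out by any hyperplane of $\c H_G$, hence would fail to define a face), and that distinct admissible sign vectors give distinct cones. Once this dictionary between sign vectors and ordered partitions is in place, the single-segment computation, the Minkowski-sum refinement, and the order-reversal are all routine.
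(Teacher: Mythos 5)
Your argument is correct, but note that the paper itself offers no proof to compare against: this proposition is quoted from \cite[Definition 2.5]{Stanley2007} as a known result. What you wrote is essentially the standard argument, and it is sound: the normal fan of a single segment $[\b e_i, \b e_j]$ is the fan induced by the hyperplane $H_{i,j}$; since the normal fan of a Minkowski sum is the common refinement of the normal fans of the summands (a fact the paper recalls in \Cref{ssec:Deformations_intro}), $\c N_{\ZG}$ is the fan induced by $\c H_G$; and the sign-vector dictionary, the acyclicity criterion for realizability, and the summand-by-summand computation of $\ZG^{\b c}$ are all correct. One point you flag as ``bookkeeping'' deserves emphasis, because it is genuinely needed and is glossed over by the statement itself: the paper's definition of an ordered partition (an arbitrary partition $\mu$ of $V$ together with $\rho\in\c O(\contrG)$) does not require the parts of $\mu$ to induce connected subgraphs of $G$, yet the claimed bijection only holds with that requirement. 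For instance, for the path $1$--$2$--$3$, the pair given by $\mu = \bigl\{\{1,3\},\{2\}\bigr\}$ with the orientation $\{1,3\}\to\{2\}$ is an ordered partition in the paper's sense, but the cone $\bigl\{\b x ~;~ x_1 = x_3 \leq x_2\bigr\}$ it defines is not a face of the arrangement $\c H_G$ and is not the normal cone of any face of $\ZG$ (it is strictly contained in the normal cone of the vertex $2\b e_2$). Your construction, which defines the blocks of $\mu$ as connected components of the zero-sign subgraph, builds in exactly the right condition and so avoids this pitfall.
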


\begin{remark}\label{rmk:knowing_if_deformation}
Note that in order to know that $\polytopeP$ is a deformation of $\ZG$, one needs to verify that $\c N_\polytopeP$ coarsens $\c N_{\ZG}$.
This is equivalent to checking that all co-dimension 1 cones of $\c N_\polytopeP$ are contained in a hyperplane $H_{i, j}$ for some $\{i, j\} \in E$, because $\c N_{\ZG}$ is the fan induced by the \defn{graphical arrangement} $\c H_G := \bigl(H_{i, j} ~;~ \text{for } \{i, j\}\in E\bigr)$.
Hence, $\polytopeP$ is a deformation of $\ZG$ if and only if all the edges of $\polytopeP$ are dilates of $\b e_i - \b e_j$ for some $\{i, j\} \in E$.
When $G = K_n$ is the complete graph, we recover the well-known fact that $\polytopeP$ is a generalized permutahedron if and only if all its edges are dilates of of $\b e_i - \b e_j$ for some $1\leq i < j\leq n$.
\end{remark}

According to \Cref{prop:Normal_fan_ZG}, we can have a full understanding of $k$-dimensional faces of $\ZG$ for a graph $G$ by studying $\ZG[H]$ for induced sub-graphs $H$ of $G$ with $\dim\ZG[H] = |V(H)| - |\b{cc}(H)| = k$.
Especially, we will need a precise understanding of the faces of dimension 3 and less of $\ZG$:
\begin{compactenum}
\item[Dim 0:] Vertices of $\ZG$ are in bijection with acyclic orientations of $G$, \ie $V(\ZG) \simeq \c O(G)$.
\item[Dim 1:] Edges of $\ZG$ are in bijection with pairs $(e, \rho)$ where $e\in E$ is an arc of $G$ and $\rho\in \c O(\contrG[e])$.
\end{compactenum}

The possible faces of dimension $2$ and $3$ are described in the following \Cref{fig:table_small_dim_ZG}.
For each induced sub-graph $H$ of $G$ of the form indicated in the second column of \Cref{fig:table_small_dim_ZG}, there are $\left|\c O(\contrG[\b {cc}(H)])\right|$ many faces of $\ZG$ isomorphic to $\ZG[H]$.
Recall that graphical zonotopes of non-connected graphs are normally equivalent to graphical zonotopes of any 1-sum of their connected components.

Two acyclic orientations $\rho, \rho'\in\c O(G)$ differ on the orientation of only one arc of $G$ if and only the two corresponding vertices of $\ZG$ share a common edge.
In this case, we will say that $\rho$ into $\rho'$ are linked by a \defn{flip}.
The \defn{flip graph} is the graph whose node set is $\c O(G)$, and edges are between orientations linked by a flip.
It is isomorphic to the graph of $\ZG$, and hence connected.

\begin{figure}[h]
\begin{center}
\begin{tabular}{ >{\centering\arraybackslash} m{1cm} >{\centering\arraybackslash} m{5cm} >{\centering\arraybackslash} m{3cm} >{\centering\arraybackslash} m{3cm} }
     $\dim \ZG$ & $G$ & $\ZG$ & Figure \\ \hline
     2 & \includegraphics[scale=0.4]{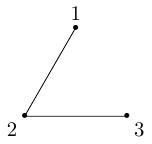} & Parallelogram & \includegraphics[scale=0.5]{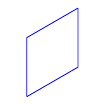}\\
     2 & \includegraphics[scale=0.4]{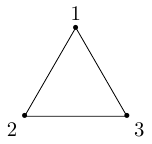} & Hexagon & \includegraphics[scale=0.4]{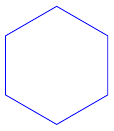} \\ \hline
     3 & \includegraphics[scale=0.4]{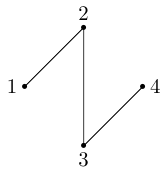} ~~ \includegraphics[scale=0.4]{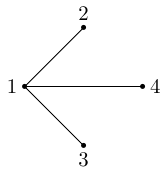} & Cube & \includegraphics[scale=0.5]{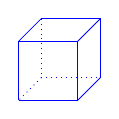} \\
     3 & \includegraphics[scale=0.4]{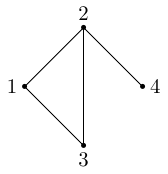} & Hexagonal prism & \includegraphics[scale = 0.375]{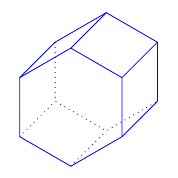} \\
     3 & \includegraphics[scale=0.4]{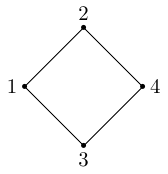} & Rhombic dodecahedron & \includegraphics[scale=0.375]{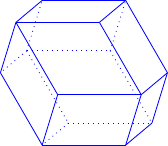} \\
     3 & \includegraphics[scale=0.4]{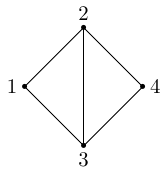} & Hexa-rhombic dodecahedron & \includegraphics[scale=0.3]{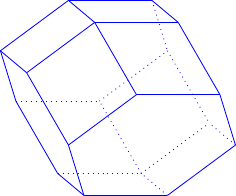} \\
     3 & \includegraphics[scale=0.4]{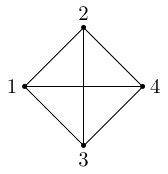} & Permutahedron & \includegraphics[scale=0.25]{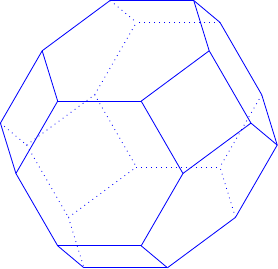}
\end{tabular}
\end{center}
\caption{Graphical zonotopes of dimension 2 and 3.
We only need to consider connected graphs on 4 nodes or less, as, for any non-connected graph $G$, the graphical zonotope $\ZG$ is normally equivalent to the one of any 1-sum of the connected components of $G$.}
\label{fig:table_small_dim_ZG}
\end{figure}


Deformation cones of graphical zonotopes have been studied before in \cite{PPP2023gZono}.
There, the authors gave a facet description of $\DCZG$ for all graphs $G$.
We will not use such a precise description in what follows.
The present work focuses on the rays of $\DCZG$, but we recall their result, as we will need the dimension and the number of facets of $\DCZG$.
The dimension was already computed by Raman Sanyal and Josephine Yu (personal communication), who computed the
space of Minkowski 1-weights of graphical zonotopes.

\begin{theorem}\label{thm:DCZG_dim_and_nbr_facet}\emph{(\cite[Corollary 2.7]{PPP2023gZono}).}
Let $G = (V, E)$ be a graph, $T$ its set of triangle, and $\Omega(G)$ the number of its (induced) cliques\footnote{Singletons are not considered cliques: a clique is any $X\subseteq V$ with $\restrG[X] \simeq K_r$ for $r = |X| \geq 2$.}, then $\dim \DCZG = \Omega(G)$, and $\DCZG$ has $\sum_{e\in E}2^{\left|\left\{t\in T ~;~ e\,\subseteq\, t\right\}\right|}$ facets.
\end{theorem}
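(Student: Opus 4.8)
The plan is to work entirely inside the edge-length deformation cone $\DCe[\ZG]$ of \Cref{thm:polygonal_face_equation} and to read off both invariants from its defining data. The coordinates of $\R^{E(\ZG)}$ are indexed by the Edges of $\ZG$, that is by pairs $(e,\rho)$ with $e\in E$ and $\rho\in\c O(\contrG[e])$, and the only equations cutting out $\DCe[\ZG]$ are the polygonal face equations of its $2$-faces. By \Cref{fig:table_small_dim_ZG}, every such face is either a parallelogram (from an induced $P_3$ or a pair of disjoint edges) or a hexagon (from an induced triangle). I would first record the local form of each equation: a parallelogram equation is a pair of within-zone identifications forcing the two opposite Edges lying in a common zone $e$ (the Edges parallel to $\b e_i-\b e_j$) to have equal length; a hexagon equation is the closure relation $(\ell_1-\ell_1')\,\b u_1+(\ell_2-\ell_2')\,\b u_2+(\ell_3-\ell_3')\,\b u_3=\b 0$, which, since $\b u_1+\b u_2+\b u_3=\b 0$, is equivalent to the two scalar constraints $\ell_1-\ell_1'=\ell_2-\ell_2'=\ell_3-\ell_3'$ relating the three zones of the triangle.

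\textbf{Facet count.} Fixing a zone $e=\{i,j\}$, the parallelogram equations generate an equivalence relation on the Edges of $\ZG$ in that zone, and I would count its classes. Two Edges differing by a flip of an edge $\{k,l\}$ that does not complete a triangle with $e$ are opposite sides of a parallelogram, hence identified; the only flips that are \emph{not} identifications happen at a vertex $k$ with $\{i,j,k\}\in T$, and for such $k$ acyclicity forces $k$ to be a common source or a common sink of the pair $\{i,j\}$. Granting that the non-triangle flips act transitively on each fixed sign-pattern, a class is pinned down exactly by one sign in $\{+,-\}$ for every triangle through $e$, giving $2^{m_e}$ classes with $m_e=|\{t\in T; e\subseteq t\}|$. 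After using up all parallelogram equations we are left with one variable $\ell_{e,\sigma}$ per class, so $\sum_{e\in E}2^{m_e}$ variables in total; the inequalities $\ell_{e,\sigma}\geq 0$ are the only facet candidates, and since the hexagon equations are mere difference relations they never force an $\ell_{e,\sigma}$ to vanish on the cone, so each inequality is irredundant and we obtain exactly $\sum_{e\in E}2^{m_e}$ facets.

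\textbf{Dimension.} It remains to compute the rank of the hexagon equations modulo the parallelogram identifications: in the class variables, the hexagon equation attached to $t=\{i,j,k\}$ equates the jumps $\ell_{e,\sigma^+}-\ell_{e,\sigma^-}$ across the three zones of $t$, where $\sigma^{\pm}$ flip only the coordinate indexed by $t$. I would show their corank collapses the count $\sum_{e}2^{m_e}$ precisely to $\Omega(G)$. The cleanest certificate for the final value is a direct basis of the linear span of $\DCZG$: for each clique $X$ the simplex $\simplex_X$ has all its Edges in directions $\b e_a-\b e_b$ with $\{a,b\}\in E$, hence is a deformation of $\ZG$ by \Cref{rmk:knowing_if_deformation}; moreover $\ZG$ is a deformation of $\Perm[n]$, so $\DCZG$ is a face of the submodular cone by \Cref{thm:faces_of_DC_are_DC}. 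As the $\simplex_X$ with $|X|\geq 2$ are linearly independent (they form a basis of the space of virtual generalized permutahedra), the clique-simplices $\{\simplex_X; X\text{ clique of }G\}$ are independent elements of $\DCZG$, giving $\dim\DCZG\geq\Omega(G)$; the reverse inequality follows from the rank count, or from checking that every deformation of $\ZG$ has vanishing $\simplex_X$-coordinate whenever $X$ is not a clique.

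The local analysis of a single parallelogram or hexagon is routine; the genuine obstacle is the global bookkeeping of the corank of the hexagon system. One must verify both that the triangle sign-patterns assemble consistently across the three zones of each triangle (so that the hexagon faces are correctly enumerated), and that the resulting difference relations have exactly the rank needed to cut $\sum_{e}2^{m_e}$ down to $\Omega(G)$ — equivalently, that exactly the non-clique directions are annihilated. Matching this purely local, triangle-by-triangle count with the global clique invariant $\Omega(G)$, which a priori also counts $K_4$'s and larger cliques that are invisible to any single $2$-face, is the step I expect to require the most care.
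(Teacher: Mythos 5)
First, a point of context: the paper never proves this statement at all --- it imports it from \cite{PPP2023gZono}, where it is obtained through the height (support-function) parameterization of the deformation cone --- so there is no in-paper proof to compare against; your edge-length route is, however, very much in the spirit of the arguments the paper itself uses for \Cref{thm:Triangle_free_DC_simplicial} and \Cref{thm:DCZG_K4-free_graphs}. Judged on its own terms, your text is a plan rather than a proof, and the decisive step is missing. For the dimension, your lower bound $\dim\DCZG\geq\Omega(G)$ is sound (each clique simplex $\simplex_X$ is a deformation of $\ZG$ by \Cref{rmk:knowing_if_deformation}, and the simplices $\simplex_X$ for $|X|\geq 2$ are linearly independent modulo translation), but the upper bound is exactly the rank computation you explicitly defer (``the step I expect to require the most care''). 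The alternative route you mention --- that every deformation of $\ZG$, expanded in the basis $\bigl(\simplex_X\bigr)_{|X|\geq 2}$, has vanishing coordinate on every non-clique $X$ --- is also left unchecked, and it is not a routine verification: it is essentially the content of the theorem, and it must detect $K_4$'s and larger cliques which, as you note yourself, no single polygonal face equation sees. So the equality $\dim\DCZG=\Omega(G)$ is not established by the proposal.

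The facet count has two gaps of its own. (i) The count of $2^{m_e}$ classes per edge $e$, where $m_e=\bigl|\bigl\{t\in T ~;~ e\subseteq t\bigr\}\bigr|$, rests on the transitivity claim you only ``grant'': any two acyclic orientations of $\contrG[e]$ that agree on the contracted edges (those coming from triangles containing $e$) are connected by flips of the remaining edges. This is true --- the regions of the graphical arrangement of $\contrG[e]$ lying in the open convex cone cut out by the ``special'' hyperplanes can be joined by a generic segment inside that cone, which crosses only non-special walls one at a time --- but without a proof of it you only control the number of classes from below, so the facet count could a priori exceed $\sum_{e\in E}2^{m_e}$. (ii) The inference ``the hexagon equations are mere difference relations, hence never force an $\ell_{e,\sigma}$ to vanish, hence each inequality is irredundant'' is not valid: irredundancy is not about equations forcing vanishing, but about whether dropping the inequality changes the cone. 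To show each class inequality is facet-defining, and that distinct classes give distinct facets, you must exhibit for each class $(e,\sigma)$ a deformation of $\ZG$ whose class-lengths are all positive except precisely that one (for $G=K_3$ such a certificate is the pentagon $\simplex_{\{1,2\}}+\simplex_{\{1,3\}}+\simplex_{\{1,2,3\}}$; in general one can assemble certificates from clique simplices, but the proposal does not do this). In short, the skeleton --- zones, sign classes, difference relations, clique simplices --- is the right one, but all three load-bearing steps (transitivity, irredundancy, and above all the corank of the hexagon system) are left open.
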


\begin{remark}
According to this \Cref{thm:DCZG_dim_and_nbr_facet}, the dimension of $\DCZG$ is $\Omega(G)$, so one can expect to easily find $\Omega(G)$ rays of $\DCZG$.
Indeed, for each induced clique on node set $K$, the simplex $\simplex_K$ is Minkowski indecomposable (all its 2-faces are triangles, so the polygonal face equation forces all lengths to be equal), and it is a deformation of $\ZG$ (by \Cref{rmk:knowing_if_deformation}), so is it associated to a ray of $\DCZG$.
Furthermore, $-\simplex_K$ is also associated to a ray of $\DCZG$, giving $2\Omega(G) - |E|$ many rays (note that $-\simplex_K$ and $\simplex_K$ are normally equivalent if and only if $|K| = 2$).
\end{remark}

\subsection{Deformations of 2-dimensional graphical zonotopes}\label{ssec:2d_ZG}\label{ssec:Parallelograms_and_hexagons}

On the one hand, faces of dimension 2 of a polytope governs its deformations through the polygonal face equations.
On the other hand, faces of dimension 2 of graphical zonotopes correspond to 2-dimensional graphical zonotopes, \ie parallelograms and regular hexagons.
This short subsection describes the polygonal face equations and deformations of these two polygons.

\paragraph{Parallelograms}

When $G$ is a graph with $2$ arcs $e, f$ (disjoint or not), then its graphical zonotope $\ZG$ is a parallelogram.
Its 4 vertices correspond to the acyclic orientations of these arcs, and each of its 4 edges correspond to the contraction of one arc and an orientation of the other.

As $\ZG$ is a parallelogram, its opposite edges are parallel, and hence the polygonal face equation boils down to the opposite edges having the same length, see \Cref{fig:P3_decorated_parallelogram}.
Labeling $(e, \overrightarrow{f})$, $(e, \overleftarrow{f})$, $(f, \overrightarrow{e})$, and $(f, \overleftarrow{e})$ the edges of $\ZG$, and labeling $\b\ell\in \R_+^{E(\ZG)}$ accordingly, we have:
$$\ell_{e, \overrightarrow{f}} ~=~ \ell_{e, \overleftarrow{f}} ~~~~~~~~\text{ and }~~~~~~~~ \ell_{f, \overrightarrow{e}} ~=~ \ell_{f, \overleftarrow{e}}$$

\begin{figure}
    \centering
    \begin{subfigure}[b]{0.49\textwidth}
    \centering
    \includegraphics[width=0.6\linewidth]{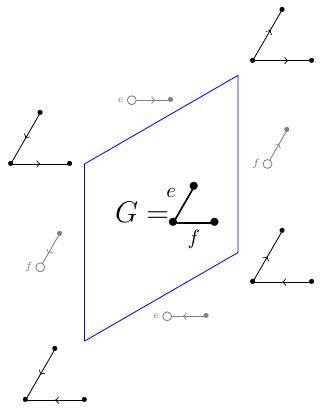}
    \caption{Graphical zonotope for the path on 3 nodes}
    \label{fig:P3_decorated_parallelogram}
    \end{subfigure}
    \begin{subfigure}[b]{0.49\textwidth}
    \centering
    \includegraphics[width=0.7\linewidth]{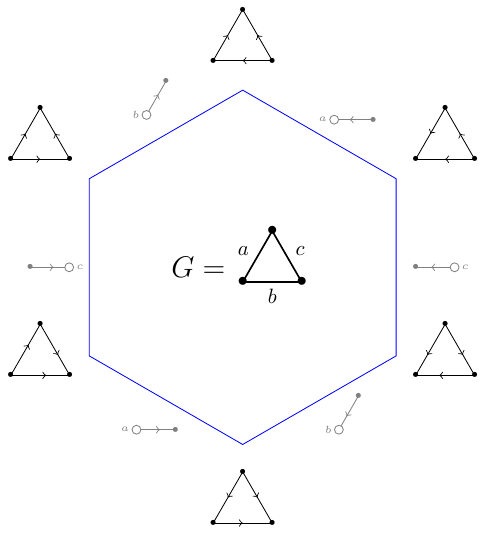}
    \caption{Graphical zonotope for $K_3$}
    \label{fig:K3_decorated_hexagon}
    \end{subfigure}
    \caption{Each vertex and edge is labeled by the corresponding ordered partition: acyclic orientations of $G$ for vertices, and acyclic orientations of a contraction of $\contrG[x]$ for a pair of parallel edges.}
\end{figure}

The (edge-length) deformation cone of $\ZG$ is the intersection of $\R_+^{E(\ZG)} \simeq \R_+^4$ with the (kernel of) these two equations.
We have $|E| = 2$, $|T| = 0$ and $\Omega(G) = 2$, so, by \Cref{thm:DCZG_dim_and_nbr_facet}, this gives rise to a 2-dimensional (simplicial) cone.
Its 2 rays are associated to the segments $\simplex_e$ and $\simplex_f$.
Any deformation $\polytopeP$ of $\ZG$ can thus be written (in a unique way) as $\polytopeP = \lambda_e \simplex_e + \lambda_f \simplex_f$ for some $\lambda_e, \lambda_f \geq 0$.
There are 4 normal equivalence classes of deformations of $\ZG$: the class of $\ZG$, the class of $\simplex_e$, the class of $\simplex_f$, and the class of $\b 0$ (the $0$-dimensional polytope).

\paragraph{Regular hexagons}

When $G = K_3$ is the complete graph on $3$ nodes, \ie a triangle, then $\ZG[K_3]$ corresponds to the 2-dimensional permutahedron: a regular hexagon.
This case is a bit more convoluted than the parallelogram, but remains manageable.

Its 6 vertices correspond to the 6 acyclic orientations of the triangle (only 2 orientations give rise to a cycle).
Its 6 edges come in 3 pairs of parallel edges.
Each pair of parallel edges corresponds to the contraction of one arc of $G$.
We name $a$, $b$ and $c$ the arcs of $G$.
The graph $\contrG[a]$ is just an arc, each of its 2 orientations giving rise to an edge of $\ZG$.
We label the edges of $\ZG$ by $(a, \to)$, $(a, \leftarrow)$, respectively $(b, \to)$, $(b, \leftarrow)$, and $(c, \to)$, $(c, \leftarrow)$, where $\to$ is the orientation of $\contrG[a]$, respectively $\contrG[b]$ and $\contrG[c]$, from the contracted arc towards the other node, and conversely for $\leftarrow$.

It is not hard to check that the polygonal face equation boils down to the two equations:
\begin{equation}\label{eqn:hexagonal_face_eqn}
\ell_{a, \to} - \ell_{a, \leftarrow} ~~=~~ \ell_{b, \to} - \ell_{b, \leftarrow} ~~=~~ \ell_{c, \to} - \ell_{c, \leftarrow}    
\end{equation}


This motivates the following crucial definition.

\begin{definition}\label{def:step_hexagon}
For any hexagonal face $\polytopeH$ of a deformation of a graphical zonotope, the \defn{step of $\polytopeH$}, denoted \defn{$\delta_\polytopeH$}, is the absolute value of the difference of the lengths of opposite edges.
\end{definition}

\begin{remark}
This definition can also be made for any hexagonal face of any generalized permutahedron.
Note also that parallelogram faces can be interpreted as degenerated hexagonal faces: not only the difference of length of opposite edges is the same, but this step is actually $0$.
However, not all quadrilaterals in generalized permutahedra are parallelograms.
In \Cref{fig:DCZG_K3} (bottom right), there is a quadrilateral in which only one pair opposite edges are parallel.
\end{remark}

Finally, the deformation cone of $\ZG[K_3]$ is the intersection of $\R_+^{E(\ZG[K_3])} \simeq \R_+^6$ with (the kernel) of \Cref{eqn:hexagonal_face_eqn}.
We have $|E| = 3$, $|T| = 1$ and $\Omega(G) = 4$, so, by \Cref{thm:DCZG_dim_and_nbr_facet}, we know that $\DCZG$ is a 4-dimensional cone with 6 facets.
Here, the explicit computation is not as easy as for the case of the parallelograms, see \cite[Example 2.8]{PPP2023gZono} for the details.
We obtain a 4-dimensional cone with 5 rays: $\simplex_a$, $\simplex_b$, $\simplex_c$ which are segments, and $\simplex_V$, $-\simplex_V$ which are (geometric) triangles.
The cone $\DCZG[K_3]$ itself is (a cone over) a bi-pyramid over a triangle: $\simplex_a$, $\simplex_b$, $\simplex_c$ are the vertices of the triangle, and $\simplex_V$, $-\simplex_V$ the two apices of the bi-pyramid.
Consequently, any deformation $\polytopeP$ of $\ZG[K_3]$ can be written (in a unique way) as $\polytopeP = \lambda_a \simplex_a +\lambda_b\simplex_b + \lambda_c\simplex_c \pm \lambda_V \simplex_V$ with $\lambda_a,\lambda_b,\lambda_c,\lambda_V\geq 0$.
This description is a triangulation of $\DCZG[K_3]$, see \Cref{fig:DCZG_K3}.

Note that $\pm\simplex_V$ \textbf{are not} graphical zonotopes themselves.

\begin{figure}[h]
    \centering
    \includegraphics[scale=0.8]{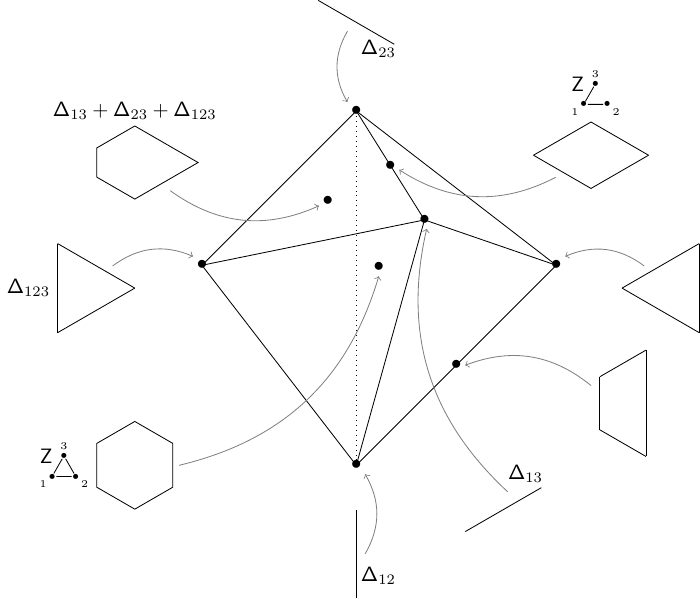}
    \caption{(\cite[Figure 2]{PPP2023gZono}). For $K_3$, the graphical zonotope $\ZG[K_3]$ is a regular hexagon, \ie the 2-dimensional permutahedron (bottom left).
    Its deformation cone $\DCZG[K_3]$ is 4-dimensional.
    We picture a 3-dimensional affine section of $\DCZG[K_3]$.
    The deformations of $\ZG[K_3]$ corresponding to some of the points of $\DCZG[K_3]$ are depicted.
    Especially, all points in the interior correspond to polytopes normally equivalent to $\ZG[K_3]$, while the above left polytope is Loday's associahedron.\vspace{-0.2cm}}
    \label{fig:DCZG_K3}
\end{figure}

\subsection{Triangle-free graphs}\label{ssec:Triangle_free}

Triangle-free graphs, \ie graphs with no induced $K_3$, has been addressed in \cite[Section 2.4]{PPP2023gZono}.

\begin{theorem}\label{thm:Triangle_free_DC_simplicial}\emph{(\cite[Corollary 2.9 \& 2.10]{PPP2023gZono}).}
The deformation cone $\DC[\ZG]$ is simplicial if and only if $G = (V, E)$ is triangle-free.
In this case, each face of this simplicial cone is (linearly equivalent to) $\DC[\polytope{Z}_H]$ for a sub-graph $H$ of $G$ (defined by a subset of edges).
\end{theorem}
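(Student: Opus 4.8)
The plan is to extract both directions directly from the dimension and facet counts recorded in \Cref{thm:DCZG_dim_and_nbr_facet}, and then to pin down the rays explicitly. I will argue with the edge-length deformation cone $\DCe[\ZG]$: it is pointed, being contained in $\R_+^{E(\ZG)}$, and it is linearly isomorphic to $\DCZG$ modulo its lineal by \Cref{thm:polygonal_face_equation}. Since being simplicial is invariant under linear isomorphism, it is enough to settle the question for $\DCe[\ZG]$, and I will freely transport the conclusion back to $\DCZG$.

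First I would treat the implication ``triangle-free $\Rightarrow$ simplicial''. If $G$ has no triangle, then its set of triangles is empty, its only cliques are its edges, so $\Omega(G) = |E|$, and the facet count $\sum_{e\in E} 2^{|\{t\in T ~;~ e\subseteq t\}|}$ collapses to $\sum_{e\in E} 2^0 = |E|$. Thus \Cref{thm:DCZG_dim_and_nbr_facet} presents $\DCe[\ZG]$ as a pointed polyhedral cone of dimension $|E|$ cut out by exactly $|E|$ facet inequalities. A pointed cone whose number of facets equals its dimension is simplicial, which settles this direction.

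For the converse I would prove the contrapositive ``$G$ has a triangle $\Rightarrow$ $\DCZG$ is not simplicial'' by counting rays. A triangle is a clique of size $3$, so a single triangle already forces $\Omega(G) > |E|$. By the remark following \Cref{thm:DCZG_dim_and_nbr_facet}, every edge $e$ yields the ray $\simplex_e$ and every clique $K$ with $|K| \geq 3$ yields two \emph{distinct} rays $\simplex_K$ and $-\simplex_K$ (they are normally equivalent only when $|K| = 2$); hence $\DCe[\ZG]$ has at least $2\Omega(G) - |E|$ extreme rays. When $\Omega(G) > |E|$ this exceeds $\Omega(G) = \dim \DCe[\ZG]$, so the cone has strictly more rays than its dimension and cannot be simplicial.

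It remains to describe the faces in the triangle-free case. There $\Omega(G) = |E|$ and there are no cliques of size $\geq 3$, so the rays listed above are exactly the $|E|$ segments $\simplex_e$, $e \in E$; as a simplicial $|E|$-dimensional cone has precisely $|E|$ rays, these are all of them. Every face is then $\cone\bigl(\simplex_e ~;~ e\in S\bigr)$ for some $S \subseteq E$, and writing $H = (V, S)$ the deformation $\sum_{e\in S}\simplex_e = \ZG[H]$ sits in the relative interior of that face; by the relative-interior argument in the proof of \Cref{thm:faces_of_DC_are_DC}, the face whose relative interior contains $\ZG[H]$ is exactly $\DCZG[H]$, which gives the stated identification. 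The only genuinely delicate points are borrowed wholesale from the cited results, namely that the $2\Omega(G) - |E|$ polytopes $\pm\simplex_K$ really are pairwise non-equivalent extreme rays (Minkowski-indecomposability of simplices) and that each face's relative-interior representative is a subgraph zonotope; granting these, the theorem is pure bookkeeping on the counts of \Cref{thm:DCZG_dim_and_nbr_facet}.
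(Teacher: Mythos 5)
Your proof is correct, but it is not the route the paper's own proof takes; it is essentially a completed version of the counting deduction that the paper only sketches \emph{before} its proof and defers to \cite[Corollary 2.10]{PPP2023gZono}. Your forward direction (a pointed cone with as many facets as its dimension is simplicial, applied to the counts of \Cref{thm:DCZG_dim_and_nbr_facet}) is exactly that sketch made rigorous. Your converse, however, differs from both arguments in the paper: the sketch compares the facet count with the dimension, and the featured ``alternative proof'' instead exhibits $\DCZG[K_3]$ --- a cone over a bipyramid, hence non-simplicial --- as a face of $\DCZG$ via \Cref{thm:faces_of_DC_are_DC}; you count rays instead, using the remark after \Cref{thm:DCZG_dim_and_nbr_facet} to get at least $2\Omega(G)-|E| > \Omega(G)$ rays as soon as a triangle exists, which is clean and self-contained. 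Your face identification (every face of a simplicial cone is spanned by a subset of rays, and $\ZG[H]$ lies in the relative interior of the face spanned by $\bigl(\simplex_e ~;~ e\in S\bigr)$) is also slightly more complete than the paper's sketch, which stops at ``this already gives $2^{|E|}$ known faces.'' By contrast, the paper's featured proof works entirely in edge-length coordinates: the parallelogram face equations force $\ell_{e,\rho}$ to be invariant under flips, connectivity of the flip graph of $\contrG[e]$ yields a single value $\ell_e$ per edge, and an explicit path reconstruction shows $\polytopeQ_{\b\ell}$ is a translate of $\sum_{e\in E}\ell_e\simplex_e$, which identifies the rays, the simpliciality, and the faces all at once, essentially without invoking \Cref{thm:DCZG_dim_and_nbr_facet}. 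What your route buys is brevity: modulo the two imported facts (the dimension/facet count, and that the $\pm\simplex_K$ span distinct rays), everything is polyhedral bookkeeping. What the paper's route buys --- and the stated reason it is included --- is a demonstration of the edge-length machinery (flip-graph induction and reconstruction of $\polytopeQ_{\b\ell}$ as an explicit Minkowski sum) that is then reused and extended in the proof of the main theorem \Cref{thm:DCZG_K4-free_graphs} for $K_4$-free graphs.
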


This follows from \Cref{thm:DCZG_dim_and_nbr_facet}:
if $G$ is triangle-free, then $\Omega(G) = |E| = \sum_{e\in E} 2^{\left|\left\{t\in T ~;~ e\,\subseteq\, t\right\}\right|}$, so the dimension and number of facets of $\DCZG$ are equal.
Conversely, if $G$ has a triangle, then the number of facets of $\DCZG$ grows faster than its dimension (see the proof in \cite[Corollary 2.10]{PPP2023gZono}).
For the second part of the theorem, note that $\ZG[H]$ is a deformation of $\ZG$ when $H$ is a sub-graph of $G$, hence $\DCZG[H]$ is a face of $\DCZG$: this gives $2^{|E|}$ faces.


Motivated by \Cref{thm:Triangle_free_DC_simplicial}, it is natural to ask what happens when $G$ has triangles.
This was starting point of the present paper.
This last sub-section of these preliminaries is devoted to a second proof of \Cref{thm:Triangle_free_DC_simplicial}.
The one of \cite[Section 2.4]{PPP2023gZono} relies on the description of the deformation cone as the \emph{height} deformation cone, whereas the following one will make use of the \emph{edge-length} deformation cone.
This demonstrates how powerful this description can be.

\begin{proof}[Alternative proof of \Cref{thm:Triangle_free_DC_simplicial}]
Fix $G = (V, E)$ a triangle-free graph, and $\b \ell\in \DCe[\polytope{Z}_G]$.
We are going to prove that $\DCZG$ is simplicial.
As the edges of $\ZG$ are in bijection with the ordered partitions $(e, \rho)$ of $G$ with $e\in E$, $\rho\in \c O(\contrG[e])$, we label the coordinates of $\b\ell$ by $\ell_{e, \rho}$.

As $G$ is triangle-free, faces of dimension $2$ of $\ZG$ are parallelograms, according to \Cref{fig:table_small_dim_ZG}.
Consider two edges opposite in a 2-face $\polytopeF$: they correspond to $(e, \rho)$ and $(e, \rho')$ for $\rho, \rho'\in \c O(\contrG[e])$ linked by a flip in $\contrG[e]$.
The polygonal face equation of $\polytopeF$ ensures $\ell_{e,\rho} = \ell_{e, \rho'}$, as seen in \Cref{ssec:Parallelograms_and_hexagons}.

Fix an arc $e\in E$, consider all $\rho\in \c O(\contrG[e])$.
We know that $\ell_{e,\rho} = \ell_{e, \rho'}$ if $\rho, \rho'\in \c O(\contrG[e])$ are linked by a flip in $\contrG[e]$.
As the flip graph of $\contrG[e]$ is connected, $\ell_{e,\rho} = \ell_{e, \rho'}$ for all $\rho, \rho'\in \c O(\contrG[e])$.
We denote by $\ell_e$ this quantity.

It remains to prove that $\polytopeQ_{\b\ell}$ is (the translation of) $\polytope{R} := \sum_{e\in E}\ell_e \simplex_e$.
The polytope $\polytope{R}$ is normally equivalent to $\ZG[H]$ where $H$ is the sub-graph of $G$ whose arcs are all $e\in E$ with $\ell_e \ne 0$.
First note that $\Gamma_{\polytopeQ_{\b\ell}}$ and $\Gamma_{\polytope{R}}$ are isomorphic because they are obtained from $\Gamma_{\ZG}$ by contracting the edges of $\ZG$ of length $0$.
Pick a vertex $\b v$ of $\polytopeQ_{\b\ell}$, and its associated vertex $\b v'$ of $\polytope{R}$ (\eg obtained by choosing a generic direction $\b c\in \R^V$, then $\b v = \polytopeQ_{\b\ell}^{\b c}$ and $\b v' = \polytope{R}^{\b c}$).
To conclude, for any edge-path $\c P$ in $\Gamma_{\polytopeQ_{\b\ell}}$ starting at $\b v$, we have $\sum_{(e,\rho)\in \c P} \varepsilon_{e,\rho}^{\c P} \ell_{e,\rho} \b u_{e, \rho} = \sum_{(e,\rho)\in \c P} \varepsilon_{e,\rho}^{\c P} \ell_e \b u_{e, \rho}$ which is precisely the edge-path from $\b v'$ to the vertex of $\polytope{R}$ that correspond to the ordered partition $(e, \rho)$.
Hence, $\polytopeQ_{\b\ell} = \polytope{R} + (\b v - \b v')$.

We have proven that any deformation of $\ZG$ can be written as a (positive) Minkowski sum of $\simplex_e$ for $e\in E$.
Thus the rays of $\DC[\polytope{Z}_G]$ are exactly the simplices $\simplex_e$ for $e\in E$, and $\DC[\polytope{Z}_G]$ is simplicial (either one can use \Cref{thm:DCZG_dim_and_nbr_facet} to obtain $\dim\DC[\polytope{Z}_G] = |E|$, or one directly sees that it is impossible to have $\sum_{e\in A}\lambda_e\simplex_e = \sum_{e\in B}\lambda_e\simplex_e$ for $E=A\sqcup B$ and $\lambda_e \geq 0$ not all zeros).


Conversely, if $G$ has a triangle $t$, then $H = \restrG[t]$ is a sub-graph of $G$, and $\DCZG[H]$ is a face of $\DCZG$ by \Cref{thm:faces_of_DC_are_DC}.
As $\DCZG[H] \simeq \DCZG[K_3]$ is not simplicial, neither is $\DCZG$.
\end{proof}


\section{$K_4$-free graphs}\label{sec:K4_free_graphs}

\subsection{Triangulation, rays and 2-faces of $\DCZG$}

We have seen that the deformation cone $\DCZG[K_3]$ for the graphical zonotope of a triangle has a particularly nice triangulation into two simplicial cones of dimension 4.
Indeed, recall that $\DCZG[K_3]$ is (a cone over) a bi-pyramid over a triangle, the vertices of the triangle being associated to segments $\simplex_e$ for $e\in E(K_3)$, and the apices of the bi-pyramid to triangles $\pm\simplex_V$.

We would like to extend this triangulation to the deformation cone $\DCZG$ for any graph $G$.
Especially, such a description would solve the question of the rays of $\DCZG$.
Unfortunately, it is clear that this is no more than a vain hope:
if we were able to do so, we would solve the case of $G = K_n$, and hence enumerate the rays of the submodular cone.
The later is an open problem since Edmonds \cite{Edmonds}, and it is considered ``impossible'' by Csirmaz and Csirmaz \cite{CsirmazCsirmaz-AttemptingTheImpossible} already for $n = 6$.
To be precise, already for $G = K_4$, there are rays of $\DCZG[K_4]$ that correspond to 3-dimensional polytopes (namely to a tetrahedron, to a regular octahedron, to a pyramid over a square, and to two other polytopes): describing $\DCZG[K_n]$ for $n \geq 4$ as (a cone over) ``\emph{something} over the simplex generated by $\simplex_e$ for $e\in \binom{[n]}{2}$" is for now out of reach.

Nevertheless, when $G$ is $K_4$-free, \ie no 4 nodes are pairwise linked by an arc, then the geometry of $\ZG$ is not intricate enough to allow all the wildness of the general case.
Indeed, we will prove that we can triangulate $\DCZG$ in a similar fashion as for $\DCZG[K_3]$.
Especially, the rays of $\DCZG$ are associated to segments $\simplex_e$ for $e\in E(G)$, and triangles $\pm\simplex_t$ for $t$ a triangle of $G$.

We denote $E$ the set of arcs of $G$, and $T$ its set of triangles.

\begin{definition}\label{def:SimplicialConesForTriangulation}
For a $K_4$-free graph $G$ and $(\varepsilon_t)_t\in \{-1, 1\}^T$, we denote by \defn{$\polytope{S}_{\varepsilon}$} the simplicial cone generated by the rays $\bigl(\simplex_e ~;~ e\in E\bigr)\,\cup\,\bigl(\varepsilon_t\simplex_t ~;~ t\in T\bigr)$.
\end{definition}

\begin{lemma}\label{lem:int_are_disjoint}
For any graph $G$, and $\varepsilon, \varepsilon'\in \{-1, +1\}^T$, if $\varepsilon\ne \varepsilon'$, then: $\text{int}\bigl(\polytope{S}_\varepsilon\bigr) \cap \text{int}\bigl(\polytope{S}_{\varepsilon'}\bigr) = \emptyset$.
\end{lemma}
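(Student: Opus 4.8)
The plan is to separate the interiors by a family of linear functionals indexed by the triangles of $G$, each one reading off a \emph{signed step}. Fix, for every triangle $t \in T$ with edges $a, b, c$, a hexagonal $2$-face $\polytopeH_t$ of $\ZG$ associated to $t$ (\Cref{fig:table_small_dim_ZG}), and label its three pairs of parallel Edges as in \Cref{fig:K3_decorated_hexagon}. Define the linear functional $\sigma_t(\b\ell) := \ell_{a,\to} - \ell_{a,\leftarrow}$ on $\R^{E(\ZG)}$. By \Cref{eqn:hexagonal_face_eqn}, on the edge-length vector of any deformation of $\ZG$ this value does not depend on which edge of $t$ is chosen, and its absolute value is the step $\delta_{\polytopeH_t}$ of \Cref{def:step_hexagon}. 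The whole proof reduces to showing that $\sigma_t$ behaves diagonally on the generators of the cones $\polytope{S}_\varepsilon$.

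I would establish three facts. First, $\sigma_t(\simplex_e) = 0$ for every edge $e \in E$: the segment $\simplex_e$ assigns one common length to all Edges of $\ZG$ in direction $e$ and length $0$ to the rest, so the two opposite Edges of $\polytopeH_t$ appearing in $\sigma_t$ receive equal lengths and the difference vanishes. Second, $\sigma_t(\simplex_{t'}) = 0$ for every triangle $t' \ne t$: since two distinct triangles share at most one edge, some edge $b$ of $t$ is not an edge of $t'$, so the triangle $\simplex_{t'}$ has no Edge in direction $b$, giving $\ell_{b,\to}(\simplex_{t'}) = \ell_{b,\leftarrow}(\simplex_{t'}) = 0$; as $\simplex_{t'}$ is a deformation it satisfies \Cref{eqn:hexagonal_face_eqn} for $\polytopeH_t$, which propagates this vanishing from $b$ to $a$ and forces $\sigma_t(\simplex_{t'}) = 0$. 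Third, $\sigma_t(\simplex_t) \ne 0$: restricted to $\polytopeH_t$ the deformation $\simplex_t$ degenerates the hexagon to a triangle, playing the role of an apex $\pm\simplex_V$ of the bipyramid $\DCZG[K_3]$ from \Cref{ssec:Parallelograms_and_hexagons}, so exactly one of $\ell_{a,\to}, \ell_{a,\leftarrow}$ vanishes, whence $\sigma_t(\simplex_t) \ne 0$ and $\sigma_t(-\simplex_t) = -\sigma_t(\simplex_t)$.

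Granting these, suppose for contradiction that $\b\ell \in \text{int}\bigl(\polytope{S}_\varepsilon\bigr) \cap \text{int}\bigl(\polytope{S}_{\varepsilon'}\bigr)$. Writing $\b\ell = \sum_e \lambda_e \simplex_e + \sum_t \mu_t\, \varepsilon_t \simplex_t = \sum_e \lambda'_e \simplex_e + \sum_t \mu'_t\, \varepsilon'_t \simplex_t$ with all coefficients strictly positive (these are interiors of simplicial cones), and applying $\sigma_s$ for a fixed $s \in T$, the first two facts annihilate every term except the one at $s$, leaving $\mu_s \varepsilon_s\, \sigma_s(\simplex_s) = \mu'_s \varepsilon'_s\, \sigma_s(\simplex_s)$. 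Since $\sigma_s(\simplex_s) \ne 0$ and $\mu_s, \mu'_s > 0$, comparing signs gives $\varepsilon_s = \varepsilon'_s$. As $s$ was arbitrary, $\varepsilon = \varepsilon'$, contradicting $\varepsilon \ne \varepsilon'$; hence the interiors are disjoint.

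The main obstacle is the second fact: a triangle's deformation must not perturb the step of any \emph{other} triangle's hexagon. The resolution combines the purely combinatorial observation that distinct triangles share at most one edge with the hexagonal face equation \Cref{eqn:hexagonal_face_eqn}, which lets the evident vanishing along an unshared edge spread to the entire hexagon. I note that this argument invokes no structural hypothesis on $G$ (in particular not $K_4$-freeness), which is consistent with the statement being asserted for an arbitrary graph $G$.
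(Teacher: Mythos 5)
Your proof is correct (and, as you note, valid for an arbitrary graph $G$, matching the statement). It hinges on the same invariant as the paper's proof: the sign $\varepsilon_t$ is recovered from the signed difference of lengths of two opposite Edges of a hexagonal face of $\ZG$ associated to $t$, \ie the step of \Cref{def:step_hexagon}. But the execution is genuinely different. The paper works directly with the Minkowski decomposition of a point $\polytopeP \in \text{int}\bigl(\polytope{S}_\varepsilon\bigr)$: for $e \subset t$ and normal vectors $\b n \in \polytopeC_{e,\rho}$, $\b n' \in \polytopeC_{e,\rho'}$ in the two cones that differ by flipping the contracted edge of $t$ in $\contrG[e]$, it computes $(\varepsilon_t\simplex_t)^{\b n}$ (a copy of $\simplex_e$ or a point, according to the sign of $\varepsilon_t$) and reads $\varepsilon_t$ off from which of the Edges $\polytopeP^{\b n}$, $\polytopeP^{\b n'}$ is longer; implicitly, the contributions of all other generators to these two Edges coincide and cancel in this comparison. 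You instead package the signed step as a linear functional $\sigma_t$ on edge-length vectors and prove it is diagonal on the generators; your crucial point, $\sigma_t(\simplex_{t'}) = 0$ for $t' \ne t$, combines the observation that $t$ has an edge $b$ not in $t'$ with the hexagonal face equation (\Cref{eqn:hexagonal_face_eqn}), valid for every deformation by \Cref{thm:polygonal_face_equation}, to propagate the vanishing from $b$ to the edge defining $\sigma_t$. Your route buys a purely linear-algebraic separation that treats all other triangles uniformly (whether or not they contain $e$ or share an edge with $t$), and it makes explicit a cancellation the paper leaves to the reader; the cost is the appeal to \Cref{thm:polygonal_face_equation}, which the paper's computation avoids. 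Two cosmetic repairs: state your second fact for $\pm\simplex_{t'}$ rather than just $\simplex_{t'}$ (the argument is identical, since $-\simplex_{t'}$ has edges in the same directions), because the final computation needs it when the corresponding sign is $-1$; and expressions like $\sigma_t(\simplex_e)$ should be read as $\sigma_t$ applied to the edge-length vector of $\simplex_e$, using that edge-length vectors add under Minkowski sums --- the linearity underlying the isomorphism between $\DC$ and $\DCe$.
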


\begin{proof}
Fix $\varepsilon\in \{-1, +1\}^T$ and $\polytopeP \in \text{int}\bigl(\polytope{S}_\varepsilon\bigr)$.
There exists $\lambda_e, \lambda_t > 0$ such that $\polytopeP = \sum_{e\in E} \lambda_e \simplex_e + \sum_{t\in T} \varepsilon_t\lambda_t\simplex_t$.
Fix $e\in E$, $t\in T$ with $e\subset t$, and let $f$ be the arc of $\contrG[e]$ that correspond to the contraction of the two other arcs of $t$.
Pick any orientation $\rho\in \c O(\contrG[e])$ such that $f$ is oriented towards $e$ (which is a node of $\contrG[e]$).
For $\b n \in \polytopeC_{e, \rho}$, the face optimizing $\b n$ is $\polytopeP^{\b n} = \lambda_e\simplex_e + \sum_{s\in T, ~e\subset s}\lambda_s(\varepsilon_s\simplex_s)^{\b n}$.
We have $(\varepsilon_t\simplex_t)^{\b n} = \simplex_e$ if $\varepsilon_t > 0$, and $(\varepsilon_t\simplex_t)^{\b n}$ is a point if $\varepsilon_t < 0$.
Hence, we can read the value of $\varepsilon_t$ from the edge lengths of $\polytopeP$:
let $\rho'\in\c O(\contrG[e])$ be the orientation obtained from $\rho$ by changing the orientation of $f$, and $\b n' \in \polytopeC_{e, \rho'}$.
If the length of the edge $\polytopeP^{\b n}$ is greater than the length of the edge $\polytopeP^{\b n'}$, then $\polytopeP\notin \text{int}\bigl(\polytope{S}_\delta\bigr)$ for all $\delta\in\{-1, +1\}^T$ with $\delta_t < 0$, and conversely.
Consequently, there is a unique $\delta\in\{-1, +1\}^T$ such that $\polytopeP\in \text{int}\bigl(\polytope{S}_\delta\bigr)$.
\end{proof}

\begin{theorem}\label{thm:DCZG_K4-free_graphs}
Let $G$ be a $K_4$-free graph, $E$ its set of arcs and $T$ its set of triangles.
The cone $\DCZG$ has dimension $|E| + |T|$ and is triangulated by the $2^{|T|}$ simplicial cones $\polytope{S}_{\varepsilon}$ for $\varepsilon \in\{-1, +1\}^T$:
we have $\DCZG = \bigcup_{\varepsilon \in \{-1, 1\}^T} \polytope{S}_{\varepsilon}$ with $\text{int}\bigl(\polytope{S}_{\varepsilon}\bigr)\cap \text{int}\bigl(\polytope{S}_{\varepsilon'}\bigr) = \emptyset$ for $\varepsilon\ne \epsilon'$.

Especially, the $|E| + 2|T|$ rays of $\DCZG$ are associated to the segments $\simplex_e$ for $e\in E$, and the triangles $\simplex_t$ and $-\simplex_t$ for $t\in T$.
\end{theorem}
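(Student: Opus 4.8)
The plan is to establish the three assertions in turn, reserving the real work for the claim that the cones $\polytope{S}_\varepsilon$ cover $\DCZG$. The dimension is immediate: since $G$ is $K_4$-free, its only cliques are its edges and its triangles, so $\Omega(G) = |E| + |T|$ and \Cref{thm:DCZG_dim_and_nbr_facet} gives $\dim \DCZG = |E| + |T|$. Each proposed generator is a genuine ray: $\simplex_e$ and $\pm\simplex_t$ are deformations of $\ZG$ by \Cref{rmk:knowing_if_deformation} (all their Edges are dilates of some $\b e_i - \b e_j$ with $\{i,j\} \in E$), and each is Minkowski indecomposable because all its $2$-faces are triangles. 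Granting the covering $\DCZG = \bigcup_\varepsilon \polytope{S}_\varepsilon$, the disjointness of interiors from \Cref{lem:int_are_disjoint} upgrades it to a triangulation, and the count of $|E| + 2|T|$ rays follows since any extreme ray of the whole cone must be an extreme ray of some $\polytope{S}_\varepsilon$, hence one of the generators used.

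The heart of the argument is the covering, which I would prove by imitating the alternative proof of \Cref{thm:Triangle_free_DC_simplicial} through the edge-length parameterization of \Cref{thm:polygonal_face_equation}. Fix $\b\ell \in \DCe[\ZG]$, with coordinates $\ell_{e,\rho}$ indexed by Edges $(e,\rho)$, where $e \in E$ and $\rho \in \c O(\contrG[e])$. For a fixed edge $e$, I would analyze the function $\rho \mapsto \ell_{e,\rho}$ on the (connected) flip graph of $\contrG[e]$. A flip changes the orientation of a single edge $f$ of $\contrG[e]$, and the $2$-face of $\ZG$ joining the two corresponding Edges is, by the classification of \Cref{fig:table_small_dim_ZG}, either a parallelogram or a hexagon. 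A parallelogram flip (when $f$ comes from a single edge of $G$) forces $\ell_{e,\rho} = \ell_{e,\rho'}$, exactly as in the triangle-free case; a hexagon flip (when $f$ comes from two edges of $G$, i.e. a triangle $t \ni e$) changes $\ell_{e,\rho}$ by the signed step of that hexagon, governed by \Cref{eqn:hexagonal_face_eqn} and \Cref{def:step_hexagon}.

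The key lemma, and the step I expect to be the main obstacle, is that these steps are globally coherent: for each triangle $t$ there is a single signed step $s_t$ such that $\ell_{e,\rho}$ equals a baseline $\lambda_e := \min_\rho \ell_{e,\rho} \ge 0$ plus the sum of $\lambda_t := |s_t|$ over those triangles $t \ni e$ whose distinguished edge $f_t$ is oriented by $\rho$ on its step-increasing side, with the sign of $s_t$ independent of the edge $e \in t$ from which it is read. Because the flip graph of $\contrG[e]$ is connected, it suffices to check coherence across each $3$-dimensional face of $\ZG$, i.e. whenever two hexagon flips, or a hexagon and a parallelogram flip, are performed in tandem. This is precisely where $K_4$-freeness enters: the $3$-faces of $\ZG$ are then restricted to the list of \Cref{fig:table_small_dim_ZG} — cube, hexagonal prism, rhombic dodecahedron and hexa-rhombic dodecahedron, but never the permutahedron of $K_4$ — and on each of these the polygonal face equations force the step of a given triangle to be unchanged under flipping a transverse edge (in the hexagonal prism the two hexagons are parallel translates, and in the hexa-rhombic dodecahedron the two triangles through the shared edge decouple additively). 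The coherence of the sign of $s_t$ across the three edges of $t$ is delivered directly by \Cref{eqn:hexagonal_face_eqn}.

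With this decomposition in hand, I would set $\varepsilon_t = \mathrm{sign}(s_t)$ and $\lambda_t = |s_t|$, and verify, exactly as in the triangle-free proof, that the polytope $\polytopeQ_{\b\ell}$ is a translate of $\polytope{R} := \sum_{e \in E} \lambda_e \simplex_e + \sum_{t \in T} \varepsilon_t \lambda_t \simplex_t$, by matching the edge-paths issued from a common optimal vertex. Since $\lambda_e \ge 0$ and $\lambda_t \ge 0$, this places $\b\ell$ in $\polytope{S}_\varepsilon$, proving the covering; moreover the linear map $\b\ell \mapsto (\lambda_e, s_t)_{e,t}$ is, on each sign chamber, a bijection onto $\R^{E} \times \R^{T}$, which simultaneously shows that the $|E| + |T|$ generators of each $\polytope{S}_\varepsilon$ are linearly independent, so that the cones $\polytope{S}_\varepsilon$ are indeed simplicial.
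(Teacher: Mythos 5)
Your proposal is correct and takes essentially the same route as the paper: your baseline lengths $\min_\rho \ell_{e,\rho}$ and signed steps $s_t$ are exactly the paper's $\omega(e)$ and $\omega(t)$, your coherence lemma is precisely the paper's \Cref{lem:delta_t} (proved there, too, by reducing to the hexagonal-prism and hexa-rhombic-dodecahedron $3$-faces permitted by $K_4$-freeness) combined with the sign well-definedness obtained from \Cref{eqn:hexagonal_face_eqn}, and the concluding translation argument and appeal to \Cref{lem:int_are_disjoint} match the paper's. No gaps to report.
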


The proof of this theorem is the main goal of the present section and is provided in \Cref{ssec:proof_of_thm_DCZG_K4-free}.
It requires some preparatory lemmas, especially a detailed understanding of the geometry of the hexa-rhombic dodecahedron, see \Cref{ssec:Hexa-rhombic_Dodecahedron}.
Note that the dimension of $\DCZG$ comes directly from \Cref{thm:DCZG_dim_and_nbr_facet}, as $\Omega(G) = |E| + |T|$ because maximal cliques are triangles.
The same theorem gives that the number of facets of $\DCZG$ is $\sum_{e\in E} 2^{|\{t\in T ~;~ e\,\subseteq\, t\}|}$, which can be arbitrarily high.

\begin{remark}
For a connected $K_4$-free graph $G$ on $n$ nodes, the dimension of $\ZG$ is $\dim \ZG = n - 1$.
However, according to \cref{thm:DCZG_K4-free_graphs}, any deformation of $\ZG$ can be described as a sum of polytopes of dimension $1$ and $2$, whatever the value of $n$.
Although having a description as a Minkowski sum of low dimensional polytopes is not a complete surprise for a generalized permutahedron (for instance $\ZG = \sum_{e\in E} \simplex_e$), most generalized permutahedra do not enjoy such an easy description.
Accordingly, such phenomenon witnesses a very constrained geometry.

The aim of \Cref{sec:High_dimensional_summands} is to discuss the fact that there exists graphs $G = (V, E)$ that are not $K_4$-free but are $K_5$-free, and which admit a deformation that is Minkowski indecomposable of dimension $|V| - 1$.
\end{remark}

As the cone $\DCZG[K_3]$ is (a cone over) a bi-pyramid over a triangle, one could think that, when $G$ is a $K_4$-free graph, $\DCZG$ can be written as (a cone over) a sequence of bi-pyramids taken over the initial simplex of vertices $\simplex_e$ for $e\in E$, with the pairs of apices $(\simplex_t, -\simplex_t)$ for $t\in T$.
The following \Cref{cor:DCZG_graph_is_bi-pyramid_graph} shows that the ``graph of $\DCZG$"\footnote{The graph of the polytope obtained by intersecting $\DCZG$ with an hyperplane, \ie the $2$-skeleton of $\DCZG$.} is indeed the graph of such a sequence of bi-pyramids.
However, $\DCZG$ can not be described as such a sequence of bi-pyramids in general:
take $G$ to be the \defn{bi-triangle graph}, that is the graph obtained by gluing two triangles $K_3$ along a common arc, then $|E| = 5$, $|T| = 2$, thus, by \Cref{thm:DCZG_dim_and_nbr_facet}, $\dim \DCZG = |E| + |T| = 7$ and $\DCZG$ has $4\times 2^1 + 1 \times 2^2 = 12$ facets, but taking $|T| = 2$ consecutive bi-pyramids over a simplex of dimension $|E| = 5$ would lead to $2\times 2\times 5 = 20$ facets.

\begin{corollary}\label{cor:DCZG_graph_is_bi-pyramid_graph}
Let $G$ be a $K_4$-free graph, $E$ its set of arcs, $T$ its set of triangles.
The number of 2-dimensional faces of $\DCZG$ is $\binom{|E|+2|T|}{2} - |T|$, and they are associated to (see \Cref{fig:table_2-dim_faces_DCZG}):
\begin{compactenum}[(i)]
\item parallelograms $\simplex_e + \simplex_f$ for $e, f\in E$, $e\ne f$,
\item trapezoids $\simplex_e + \simplex_t$ and $\simplex_e - \simplex_t$ for $e\in E$, $t\in T$ with $e\subseteq t$,
\item prisms over a triangle $\simplex_e + \simplex_t$ and $\simplex_e - \simplex_t$ for $e\in E$, $t\in T$ with $e\not\subset t$,
\item prismatoids\footnote{Note that there are two kinds of prismatoids, depending where $t$ and $t'$ share a common edge or not. These prismatoids are combinatorially equivalent but not normally equivalent, see \Cref{exmpl:DCZG_two_triangles}.} $\simplex_t + \simplex_{t'}$ and $\simplex_t + \simplex_{t'}$ for $t, t'\in T$ with $t\ne t'$.  
\end{compactenum}

Said differently, the only pairs of rays of $\DCZG$ that do not lie in a common 2-face are the pairs associated to $(\simplex_t, -\simplex_t)$ for $t\in T$.
\end{corollary}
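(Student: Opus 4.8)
The plan is to build entirely on \Cref{thm:DCZG_K4-free_graphs}, which already furnishes the complete list of extreme rays of $\DCZG$: the $|E|+2|T|$ rays $\simplex_e$ for $e\in E$ together with $\simplex_t$ and $-\simplex_t$ for $t\in T$. Since $\DCZG$ is a pointed cone and any of its $2$-dimensional faces is itself a pointed $2$-dimensional cone, each $2$-face is spanned by exactly two of these extreme rays. Thus I only need to decide, for every unordered pair of rays, whether the $2$-cone it spans is a face. I will show that the $|T|$ pairs $\{\simplex_t,-\simplex_t\}$ fail and that all remaining $\binom{|E|+2|T|}{2}-|T|$ pairs succeed; the enumeration then follows at once.

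For the failing pairs, I would use that a point lies in the relative interior of a unique face. The Minkowski sum $\simplex_t+(-\simplex_t)$ is the difference body of a triangle, hence a hexagon with the same normal fan as $\ZG[\restrG[t]]$. By (the proof of) \Cref{thm:faces_of_DC_are_DC}, the deformations normally equivalent to $\ZG[\restrG[t]]$ constitute the relative interior of a face of $\DCZG$ isomorphic to $\DCZG[K_3]$, which is $4$-dimensional. Therefore $\simplex_t+(-\simplex_t)$ sits in the relative interior of a $4$-dimensional face, so the $2$-cone $\cone(\simplex_t,-\simplex_t)$ cannot itself be a face. This matches the triangulation picture: $\simplex_t$ and $-\simplex_t$ never lie in a common simplicial piece $\polytope{S}_\varepsilon$, cf.\ \Cref{lem:int_are_disjoint}.

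For the succeeding pairs $\{\rho_1,\rho_2\}$, I would exhibit the $2$-face concretely as $\DC[\polytopeQ]$ for $\polytopeQ=\rho_1+\rho_2$. Every edge of $\polytopeQ$ is a dilate of some $\b e_i-\b e_j$ with $\{i,j\}\in E$, so by \Cref{rmk:knowing_if_deformation} $\polytopeQ$ is a deformation of $\ZG$, and \Cref{thm:faces_of_DC_are_DC} makes $\DC[\polytopeQ]$ a face of $\DCZG$. It then remains to classify $\polytopeQ$ and verify that its deformation cone is exactly $\cone(\rho_1,\rho_2)$: two segments yield a parallelogram (already analysed in \Cref{ssec:2d_ZG}); a segment and a triangle through one of its edges, $\simplex_e\pm\simplex_t$ with $e\subseteq t$, yield a trapezoid; a segment and a triangle in independent directions, $\simplex_e\pm\simplex_t$ with $e\not\subset t$, yield a triangular prism; and two distinct triangles $\pm\simplex_t\pm\simplex_{t'}$ yield a prismatoid. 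In each case the polygonal and higher face equations of \Cref{thm:polygonal_face_equation} should force all edge lengths to be governed by just the two parameters carried by $\rho_1$ and $\rho_2$, giving $\dim\DC[\polytopeQ]=2$ with rays precisely $\rho_1,\rho_2$.

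Combining the two steps, the $2$-faces are in bijection with the non-forbidden pairs of extreme rays, so their number is $\binom{|E|+2|T|}{2}-|T|$, and grouping by shape type produces the stated list (i)--(iv). The main obstacle is the last verification above: I must rule out that the Minkowski sums $\rho_1+\rho_2$ acquire extra indecomposable summands. This is most delicate for the prismatoid case when $t$ and $t'$ share an edge, where the two triangles interact and one must check carefully, via the face equations, that no additional ray (for instance a segment in the shared direction) enters $\DC[\polytopeQ]$, i.e.\ that the cone stays $2$-dimensional.
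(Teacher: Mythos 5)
Your proposal is correct and takes essentially the same approach as the paper: rays from \Cref{thm:DCZG_K4-free_graphs}, exclusion of the pairs $(\simplex_t, -\simplex_t)$ because $\simplex_t + (-\simplex_t) = \ZG[K_3]$ is a hexagon with a $4$-dimensional deformation cone, and reduction of every remaining pair to checking that $\dim \DC[\rho_1 + \rho_2] = 2$. The verification you flag as the main remaining obstacle (notably the shared-edge prismatoid case) is exactly the step the paper closes by a computer check of the polygonal face equations in the four representative cases, noting that all parallelograms (respectively trapezoids, prisms over a triangle, prismatoids) are equivalent up to change of coordinates.
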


\begin{figure}
\begin{center}
\begin{tabular}{ >{\centering\arraybackslash} m{3cm} >{\centering\arraybackslash} m{3cm} >{\centering\arraybackslash} m{3cm} >{\centering\arraybackslash} m{3cm} }
     Parallelogram & Trapezoid & Prism over triangle & Prismatoid \\ 
     \includegraphics[scale=1]{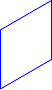} & \includegraphics[scale=0.8]{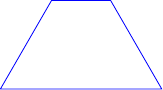} & \includegraphics[scale=0.8]{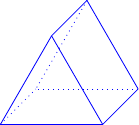} & \includegraphics[scale=0.8]{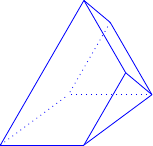} \\
     $\simplex_e + \simplex_f$ & $\simplex_e \pm \simplex_t$ & $\simplex_e \pm \simplex_t$ & $\simplex_t \pm \simplex_{t'}$ \\
     with $e\ne f$ & with $e\subseteq t$ & with $e\not\subset t$ & with $t \ne t'$
\end{tabular}
\end{center}
\caption{The polytopes associated to the 2-dimensional faces of $\DCZG$ for $G$ a $K_4$-free graph.}
\label{fig:table_2-dim_faces_DCZG}
\end{figure}

\begin{proof}
By \Cref{thm:DCZG_K4-free_graphs}, we know that the rays of $\DCZG$ are $\simplex_e$ for $e\in E$ and $\simplex_t$, $-\simplex_t$ for $t\in T$.
Firstly, for $t\in T$, $\simplex_t + (-\simplex_t) = \ZG[K_3]$ is a regular hexagon whose deformation cone has dimension $4$ (see \Cref{ssec:Parallelograms_and_hexagons}), so the pair $(\simplex_t, -\simplex_t)$ does not lie in a common $2$-face of $\DCZG$.
It remains to prove that any other pair is associated to a 2-face of $\DCZG$.
This is easily done with a computer: for each polytope $\polytopeP$ at stake, we write the polygonal face equations and compute $\dim\DCe$.
There are only $4$ cases to check, as all parallelograms (respectively trapezoids, prisms over triangle, prismatoids) are equivalent up to change of coordinates.
Note that all these $2$-dimensional faces are well-known, as they appear as 2-dimensional faces of the submodular cone $\DCZG[K_4]$ (they also appear as $2$-dimensional faces of $\DCZG[G]$ for $G$ the bi-triangle graph).
\end{proof}

\subsection{Hexa-rhombic dodecahedra and hexagonal prisms}\label{ssec:Hexa-rhombic_Dodecahedron}

When $G$ is the \defn{bi-triangle graph}, that is the graph on $4$ nodes obtained by gluing two triangles $K_3$ on a common arc, then the graphical zonotope $\ZG$ is a \defn{hexa-rhombic dodecahedron} (also called elongated dodecahedron).

\Cref{fig:ZG_two_triangles} represent the hexa-rhombic dodecahedron (left), and its graph (right).
It has 12 facets: 8 parallelograms and 4 hexagons.
The hexagons come in 2 pairs of parallel hexagons.

Fix a deformation of $\ZG$ and consider two parallel hexagons $\polytopeH$ and $\polytopeH'$ with their step $\delta_\polytopeH$, $\delta_{\polytopeH'}$ (see \Cref{def:step_hexagon}).
Recall that the polygonal face equation ensures that edges that are opposite in a parallelogram have the same length.
Consequently, $\delta_\polytopeH = \delta_{\polytopeH'}$ because the corresponding edges of $\polytopeH$ and $\polytopeH'$ share the same lengths, see \Cref{fig:ZG_two_triangles} (right: edges with the same colour have the same length in any deformation of $\ZG$).

Now, we can extend this result to any $K_4$-free graph.
When $G$ is $K_4$-free, hexagonal faces of $\ZG$ are associated to ordered partitions of the form $(t, \rho)$ where $t$ is a triangle of $G$, and $\rho\in\c O(\contrG[t])$.
We denote by $\delta_{t, \rho}$ the step of the hexagon associated to $(t, \rho)$.

\begin{lemma}\label{lem:delta_t}
Let $G$ be a $K_4$-free graph, $t$ a triangle of $G$.
For $\rho, \rho'\in \c O(\contrG[t])$ one has $\delta_{t, \rho} = \delta_{t, \rho'}$ for any deformation of $\ZG$.
Consequently, we name $\delta_t$ this quantity.
\end{lemma}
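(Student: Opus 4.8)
The plan is to combine the connectivity of the flip graph with the observation, already made above for the hexa-rhombic dodecahedron, that two parallel hexagonal faces of a common higher face have equal step. Since $\contrG[t]$ is a graph, its set of acyclic orientations $\c O(\contrG[t])$ is connected under flips (the flip graph is the graph of $\ZG[\contrG[t]]$, hence connected). It therefore suffices to prove $\delta_{t,\rho}=\delta_{t,\rho'}$ when $\rho$ and $\rho'$ differ by reversing a single edge $f$ of $\contrG[t]$.

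Given such a flip, I would produce a $3$-face of $\ZG$ containing both hexagons $(t,\rho)$ and $(t,\rho')$ as follows. View $(t,\rho)$ as the ordered partition whose only non-singleton block is $t$, and coarsen it by merging the two endpoints of $f$ into one block; call $\nu$ the resulting partition of $V$. Because $\rho$ and $\rho'$ differ only along $f$, and $f$ is contracted away in $\contrG[\nu]$, both orientations descend to the \emph{same} acyclic orientation $\sigma$ on $\contrG[\nu]$. Hence $(t,\rho)$ and $(t,\rho')$ both refine the ordered partition $(\nu,\sigma)$, so by \Cref{prop:Normal_fan_ZG} they are $2$-faces of the face $\polytopeF$ indexed by $(\nu,\sigma)$, which one checks to be $3$-dimensional.

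A short case analysis pins down $\polytopeF$. If $f$ joins the block $t$ to a singleton $\{w\}$ with $w$ adjacent to two vertices of $t$, the merged block induces the bi-triangle graph and $\polytopeF$ is a hexa-rhombic dodecahedron; otherwise (either $w$ is adjacent to a single vertex of $t$, or $f$ joins two singletons outside $t$) the merged block induces a triangle with a pendant or a disjoint edge, and $\polytopeF$ is a hexagonal prism. The hypothesis that $G$ is $K_4$-free is precisely what prevents $w$ from being adjacent to all three vertices of $t$, excluding the permutahedral $3$-face. In either case $(t,\rho)$ and $(t,\rho')$ are the two parallel hexagonal faces of $\polytopeF$, linked through its parallelogram side-faces; exactly as in the hexa-rhombic dodecahedron discussed above, opposite Edges of a parallelogram share their length, so corresponding Edges of the two hexagons are equal, giving $\delta_{t,\rho}=\delta_{t,\rho'}$. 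Connectivity of the flip graph then yields a single value $\delta_t$.

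I expect the main obstacle to be the bookkeeping of the middle step: verifying that merging the endpoints of $f$ genuinely yields a $3$-dimensional face and not a lower-dimensional one, that both orientations restrict to the same $\sigma$ on the contraction, and that the flipped hexagons sit as the parallel pair inside $\polytopeF$. Once the correct $3$-face is identified, the parallelogram argument is routine and the hexagonal-prism case is verbatim the hexa-rhombic one.
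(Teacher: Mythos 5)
Your proposal is correct and takes essentially the same route as the paper: reduce to a single flip of an edge of the contracted graph via connectivity of the flip graph, identify the $3$-face of the graphical zonotope containing both hexagons (a hexa-rhombic dodecahedron or a hexagonal prism, the permutahedral case being excluded exactly by $K_4$-freeness), and transfer the step between the two parallel hexagons through the parallelogram face equations. Your case split according to how many vertices of $t$ the merged vertex is adjacent to is the same dichotomy the paper phrases as the flipped edge of the contraction coming from two edges of $G$ forming a triangle with an edge of $t$ versus coming from a single original edge of $G$.
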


\begin{proof}
Fix $G$ a $K_4$-free graph, and $t$ a triangle of $G$.

Pick $\rho\in\c O(\contrG[t])$, and an arc $\overline{e}$ of $\contrG[t]$ such that the orientation $\rho'$, obtained from $\rho$ by changing the orientation of $\overline{e}$, is acyclic.
As $G$ is $K_4$-free, there are two possibilities: either $\overline{e}$ comes from the contraction of two arcs $e$ and $f$ of $G$ that form a triangle with one of the arcs of $t$; or $\overline{e}$ correspond to an original arc $g$ of $G$.

In the first case, $\delta_{t, \rho} = \delta_{t, \rho'}$ for any deformation of $\ZG$ because the hexagonal faces associated to $(t, \rho)$ and $(t, \rho')$ are opposite facets of the hexa-rhombic dodecahedron $\contrG[t\,\cup\, e\,\cup\, f]$, see \Cref{fig:table_small_dim_ZG}.

In the second case, $\delta_{t, \rho} = \delta_{t, \rho'}$ for any deformation of $\ZG$ because the hexagonal faces associated to $(t, \rho)$ and $(t, \rho')$ are opposite facets of the hexagonal prism $\contrG[t\,\cup\, g]$, see \Cref{fig:table_small_dim_ZG}.

Now consider all $\rho\in \c O(\contrG[t])$.
We know that $\delta_{t, \rho} = \delta_{t, \rho'}$ if $\rho, \rho'\in \c O(\contrG[t])$ are linked by a flip in $\contrG[t]$.
As the flip graph of $\contrG[t]$ is connected, $\delta_{t, \rho} = \delta_{t, \rho'}$ for all $\rho, \rho'\in \c O(\contrG[t])$.
\end{proof}

\begin{figure}
\begin{subfigure}[b]{0.49\textwidth}
\centering
\includegraphics[scale=1.25]{Figures/ZG/Two_triangles.pdf}
\end{subfigure}
\begin{subfigure}[b]{0.49\textwidth}
\centering
\includegraphics[scale=1.7]{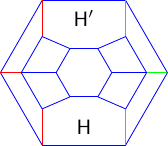}
\end{subfigure}
\caption{(Left) Hexa-rhombic dodecahedron.
(Right) The graph of the hexa-rhombic dodecahedron, obtained as a Schlegel projection on one of its hexagonal facets.
Edges with the same color have the same length, and the two labeled hexagons are parallel in the hexa-rhombic dodecahedron.
Note that there are other edges with the same length, but we do not use them.}
\label{fig:ZG_two_triangles}
\end{figure}

\begin{example}\label{exmpl:DCZG_two_triangles}
To conclude this sub-section, we can explicitly compute the deformation cone of the hexa-rhombic dodecahedron: $\DCZG$ for $G$ the bi-triangle graph.
We have seen that $|E| = 5$ and $|T| = 2$, $\dim\DCZG = \Omega(G) = |E| + |T| = 7$, and $\DCZG$ has $\sum_{e \in E} 2^{|\{t\in T~;~ e\,\subseteq\, t\}|}~=~12$ facets.
Thanks to \Cref{thm:DCZG_K4-free_graphs}, $\DCZG$ has $|E|+2|T| = 9$ rays, associated to segments $\simplex_e$ for $e\in E$, and triangles $\pm\simplex_t$ for $t\in T$.
With \Cref{cor:DCZG_graph_is_bi-pyramid_graph}, we obtain that $\DCZG$ has $\binom{9}{2} - 2 = 34$ faces of dimension 2.
The whole $f$-vector can be obtained via a computer experiment:
$$f_{\DCZG} = (9, 34, 68, 75, 44, 12, 1)$$

It would be impossible to picture here the polytopes associated to the $243$ different faces of $\DCZG$.
However, a lot of these polytopes turn out to be equivalent up to symmetries of the braid fan (\ie one is obtained from the other by permuting the coordinates and applying central symmetry).
For instance, all $5$ segments $\simplex_e$ for $e\in E$ are the same up to symmetries, and all $4$ triangles $\pm\simplex_t$ for $t\in T$ are the same.
After quotienting by these equivalence relations, we obtain the number of classes of faces per dimension:
$\overline{f}_{\DCZG} = (2, 6, 10, 13, 9, 3, 1)$.

We can even quotient a bit more.
Among the $6$ faces of dimension $2$, two of them are associated to parallelograms: $\simplex_e + \simplex_f$ for $e$, $f$ arcs of the same triangle, and $\simplex_e + \simplex_f$ (which is a square) for $e$, $f$ not belonging to the same triangle.
Hence, there are faces of $\DCZG$ associated to combinatorially isomorphic polytopes that are not normally equivalent.
For a matter of space, we draw the $24$ non-combinatorially-isomorphic polytopes associated to faces of $\DCZG$, see \Cref{fig:All_face_DCZG_two_triangles}.
\end{example}

\begin{figure}[hb]
\begin{center}
\begin{tabular}{ >{\centering\arraybackslash} m{2.5cm} >{\centering\arraybackslash} m{2.5cm} >{\centering\arraybackslash} m{2.5cm} >{\centering\arraybackslash} m{2.5cm} >{\centering\arraybackslash} m{2.5cm} }
    \includegraphics[scale=1]{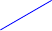} & \includegraphics[scale=1]{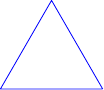} & \includegraphics[scale=1]{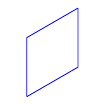} & \includegraphics[scale=0.9]{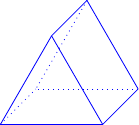} & \includegraphics[scale=0.8]{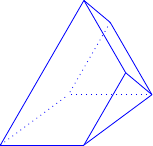} \\
    1, 5 & 1, 4 & 2, 22 & 2, 8 & 2, 4 \\ \\
    \includegraphics[scale=0.8]{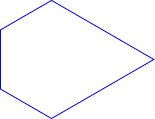} & \includegraphics[scale=1.1]{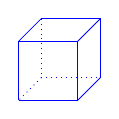} & \includegraphics[scale=1.1]{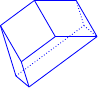} & \includegraphics[scale=1.25]{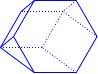} & \includegraphics[scale=1]{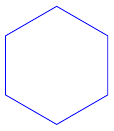} \\
    3, 12 & 3, 36 & 3, 16 & 3, 4 & 4, 2 \\ \\
    \includegraphics[scale=0.9]{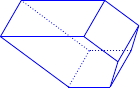} & \includegraphics[scale=1.5]{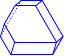} & \includegraphics[scale=1.65]{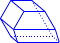} & \includegraphics[scale=1.6]{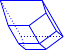} & \includegraphics[scale=1.6]{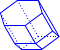} \\
    4, 40 & 4, 8 & 4, 8 & 4, 8 & 4, 8 \\ \\
    \includegraphics[scale=0.7]{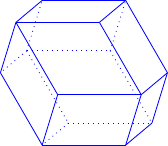} & \includegraphics[scale=0.7]{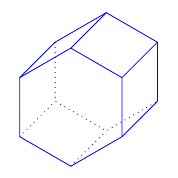} & \includegraphics[scale=1.65]{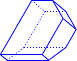} & \includegraphics[scale=1.6]{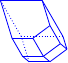} & \includegraphics[scale=1.6]{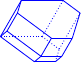} \\
    4, 1 & 5, 8 & 5, 8 & 5, 8 & 5, 16 \\ \\
    \includegraphics[scale=1.4]{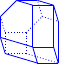} & \includegraphics[scale=1.3]{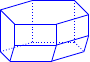} & \includegraphics[scale=1.3]{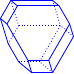} & \includegraphics[scale=0.55]{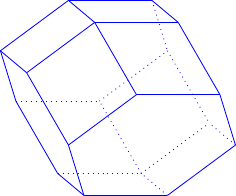} &  \\
    5, 4 & 6, 8 & 6, 4 & 7, 1 & 
\end{tabular}
\end{center}
\caption{All polytopes that are associated to faces of $\DCZG$ for $G$ the graph obtained by gluing two triangles $K_3$ along a common arc.
Under each polytope $\polytopeP$ are indicated $k, m$ where $k$ is the dimension of the face of $\DCZG$ that $\polytopeP$ is associated to, and $m$ is the number of faces of $\DCZG$ that are associated to (a polytope combinatorially isomorphic to) $\polytopeP$.}
\label{fig:All_face_DCZG_two_triangles}
\end{figure}

\subsection{Proof of the triangulation of $\DCZG$ for $K_4$-free graph}\label{ssec:proof_of_thm_DCZG_K4-free}


\begin{proof}[Proof of \Cref{thm:DCZG_K4-free_graphs}]
Let $G$ be a $K_4$-free graph, $E$ its set of arcs and $T$ its set of triangles.
Fix $\b \ell\in \DCe[\polytope{Z}_G]$, and label its coordinates by $\ell_{e, \rho}$ with $e\in E$ and $\rho\in \c O(\contrG[e])$.

We are going to prove that $\polytopeQ_{\b\ell}$ is a translation of $\sum_{e\in E} \omega(e) \simplex_e + \sum_{t\in T}\omega(t) \simplex_t$, for some $\omega(e)\geq0$ and $\omega(t)\in\R$.
This will show that $\polytopeQ_{\b\ell}$ is in the cone generated by $\simplex_e$ for $e\in E$ and $\varepsilon_t\simplex_t$ for $t\in T$ with $\varepsilon_t = 1$ if $\omega(t) > 0$, or $\varepsilon_t = -1$ if $\omega(t) < 0$.
As $\dim\DCZG = |E| + |T|$, the latter is a simplicial cone (it has as many rays as dimensions), and we triangulate $\DCZG$ by the collection of $2^{|T|}$ cones: $\cone\bigl(\{\simplex_e~;~e\in E\}\cup \{\varepsilon_t \simplex_t~;~t\in T\}\bigr)$ for $(\varepsilon_t)_{t\in T} \in \{-1,+1\}^T$.
\Cref{lem:int_are_disjoint} guaranties that the interior of theses cones are indeed disjoint.


Fix $e\in E$, and choose $\rho\in \c O(\contrG[e])$ that minimizes $\ell_{e, \rho}$ (among all $\ell_{e, \rho'}$ for $\rho'\in \c O(\contrG[e])$).

We set $\omega(e) := \ell_{e, \rho}$.
If $e$ is an arc of a triangle $t$, then let $\overline{f}$ be the arc of $\contrG[e]$ that is the contraction of the two other arcs of $t$.
If $\overline{f}$ is oriented by $\rho$ towards $e$ (which is a vertex of $\contrG[e]$), then we set $\omega(t) := -\delta_{t}$~; if $\overline{f}$ is oriented by $\rho$ away from $e$, then we set $\omega(t) := \delta_{t}$.

It remains to prove two things: first $\omega(t)$ is well-defined (\ie does not depend on $e$), and second that $\polytopeQ_{\b\ell}$ is a translation of $\sum_{e\in E} \omega(e) \simplex_e + \sum_{t\in T}\omega(t) \simplex_t$ for these chosen values of $\omega(e)$ and $\omega(t)$.


Fix $t\in T$ and consider its arcs $a, b, c\in E$, together with $\rho\in\c O(\contrG[a])$ that minimizes $\ell_{a, \rho'}$.
Let $\overline{b}$ be the arc in $\contrG[a]$ obtained by contracting $b$ (it is also $\overline{c}$).
To ease notation, suppose that $\overline{b}$ is oriented by $\rho$ towards $a$ (which is a node of $\contrG[a]$), the other case is symmetric.
Let $\overline{\rho}\in\c O(\contrG[t])$ obtained by contracting $\overline{b}$ in $\rho$.
In the hexagonal face associated to $(t, \overline{\rho})$, the edge opposite to $(a, \rho)$ is longer than the edge associated to $(a, \rho)$, by construction (the difference of length is $\delta_{t, \overline{\rho}}$), this edge is associated to $(a, \rho')$ where $\rho'$ is obtained from $\rho$ by changing the orientation of $\overline{b}$.
Besides, the two edges associated to $c$ in the hexagonal face $(t, \overline{\rho})$ are $(c, \sigma)$ and $(c, \sigma')$ where $\sigma, \sigma'\in\c O(\contrG[c])$ agree with $\overline{\rho}$ on $\contrG[t]$, and $\sigma$ orients the remaining arc of $t$ towards $c$, whereas $\sigma'$ orients it away from $c$.
From the polygonal face equation of the hexagonal face $(t,\overline{\rho})$, we obtain that $\ell_{a,\rho} - \ell_{a,\rho'} = \ell_{c,\sigma} - \ell_{c,\sigma'}$, see \Cref{ssec:Parallelograms_and_hexagons}.
Hence $\ell_{c,\sigma} - \ell_{c,\sigma'} = \delta_{t,\overline{\rho}} \geq 0$ and finally $\ell_{c,\sigma} > \ell_{c,\sigma'}$.
This proves that $\omega(t)$ is well-defined: if $\min_\rho \ell_{a, \rho}$ is attained when $\rho$ orients the remaining arc of $t$ towards $a$, then $\min_\sigma \ell_{c, \sigma}$ is attained when $\sigma$ orients the remaining arc of $t$ towards $c$.


It remains to prove that $\polytopeQ_{\b\ell}$ is (the translation of) $\polytope{R} = \sum_{e\in E} \omega(e) \simplex_e + \sum_{t\in T}\omega(t) \simplex_t$.
Fix $e\in E$ and $\rho\in\c O(\contrG[e])$, and let $\b n\in \polytopeC_{e, \rho}$ be an outer normal vector of the edge of $\ZG$ associated to the ordered partition $(e, \rho)$, see \Cref{prop:Normal_fan_ZG}.
We have that $\polytopeQ_{\b\ell}^{\b n}$ (recall that $\polytopeQ_{\b\ell}^{\b n}$ is the face of $\polytopeQ_{\b\ell}$ that maximizes the scalar product against $\b n$) is a segment of direction $\simplex_e$ and length $\ell_{e, \rho}$ (if $\ell_{e,\rho} = 0$, then it is a vertex), and
$\polytope{R}^{\b n} = \sum_{f\in E} \omega(f) \simplex_f^{\b n} + \sum_{t\in T}\omega(t) \simplex_t^{\b n}$.
On the one hand, $\simplex_f^{\b n}$ equals $\simplex_e$ if $f = e$, and is a point else way.
On the other hand, $\omega(t)\simplex_t^{\b n}$ equals $\omega(t)\simplex_e$ if $e$ is an arc of $t$ and either $\omega(t) > 0$ and $\rho$ orients the remaining arc of $t$ towards $e$, or $\omega(t) < 0$ and $\rho$ orients the remaining arc of $t$ away from $e$.
Else way $\omega(t)\simplex_t^{\b n}$ is a point.
We say that $\omega(t)$ ``agrees" with $\rho$ when $e$ is an arc of $t$ and either $\omega(t) > 0$ and $\rho$ orients the remaining arc of $t$ towards $e$, or $\omega(t) < 0$ and $\rho$ orients the remaining arc of $t$ away from $e$.
Hence, $\polytope{R}^{\b n}$ is an edge in direction $\simplex_e$ with length:\vspace{-0.24cm}
$$\omega(e, \rho) := \omega(e) + \sum_{\substack{t\in T,~~ e\in t \\ \omega(t) \text{ agrees with } \rho}} |\omega(t)|$$

\vspace{-0.2cm}
To prove that $\ell_{e, \rho} = \omega(e,\rho)$ for all $\rho\in \c O(\contrG[e])$, we proceed by induction on the number of triangles $t\in T$ (containing $e$) such that $\omega(t)$ agrees with $\rho$.

If $\ell_{e, \rho}$ is minimal (among $\ell_{e, \rho'}$), then no $\omega(t)$ agrees with $\rho$ by construction of $\omega(t)$ and thus $\omega(e, \rho) = \omega(e) = \ell_{e, \rho}$ by construction of $\omega(e)$.

Furthermore, suppose $\omega(e, \rho) = \ell_{e, \rho}$, and let $\rho'\in\c O(\contrG[e])$ be obtained from $\rho$ by changing the orientation of one arc $f$.
If $f$ and $e$ do not belong to a triangle, then $\ell_{e, \rho'} = \ell_{e, \rho}$ (because they are opposite edges in a parallelogram), and $\omega(e, \rho') = \omega(e, \rho)$ (because changing the orientation of $f$ does not change the agreement of any $\omega(t)$ with $\rho$).
If $f$ and $e$ belong to a triangle $t$ and $\omega(t)$ agrees with $\rho'$ but disagree with $\rho$, then $|\ell_{e, \rho'} - \ell_{e, \rho}| = \delta_t$ by \Cref{lem:delta_t}, and $\omega(e, \rho') - \omega(e, \rho) = |\omega(t)| = \delta_t$.

We get by induction that $\ell_{e, \rho} = \omega(e, \rho)$ for all $e\in E$ and $\rho \in\c O(\contrG[e])$ (note that, to conduct this induction, we need that the graph of flips of $\c O(\contrG[e])$ is connected, which is the case).
As the edges of $\polytopeQ_{\b\ell}$ and $\polytope{R}$ have same directions and same lengths, one is the translate of the other.
\end{proof}












\section{Graphs with high dimensional indecomposable summands}\label{sec:High_dimensional_summands}

If $G$ is a triangle-free graph, then the rays of $\DCZG$ are associated to segments (\Cref{thm:Triangle_free_DC_simplicial}); and if $G$ is a $K_4$-free graph, then the rays of $\DCZG$ are associated to segments and triangles (\Cref{thm:DCZG_K4-free_graphs}).
Morally, regarding Minkowski sums, the ``building blocks" of $\ZG$ are low-dimensional in these cases.
One would expect this to be a general phenomenon and may dare to state that: for any graph $G$, the dimension of any polytope associated to a ray of $\DCZG$ is at most the size of a maximal clique of $G$ minus 1.
This conjecture is false, see \Cref{exmp:counter-example1,exmp:counter-example2}.
Note that $\DCZG$ has rays associated to polytopes $\simplex_K$ for $K\subseteq V$ an induced clique of $G$, in particular the choice of $K$ among maximal cliques of $G$ shows that the above bound is a lower bound.

After numerous computational experiments, we propose the following conjecture:

\begin{conjecture}\label{conj:with_K4}
For all $n \geq 4$, there exists a graph $G$ on $n$ nodes that is $K_5$-free (but not $K_4$-free) such that there is a polytope $\polytopeP$ associated to a ray of $\DCZG$ with $\dim \polytopeP = n - 1$.
\end{conjecture}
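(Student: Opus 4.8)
The plan is to reduce the conjecture to the construction of one explicit polytope per $n$, and then to certify its extremality through the edge-length cone of \Cref{thm:polygonal_face_equation}. By \Cref{rmk:knowing_if_deformation}, a polytope $\polytopeP\subset\R^n$ is a deformation of $\ZG$ precisely when each of its edges is parallel to some $\b e_i-\b e_j$ with $\{i,j\}\in E(G)$. If in addition $\polytopeP$ is Minkowski indecomposable --- it admits no decomposition $\polytopeP=\polytopeQ+\polytope{R}$ with $\polytopeQ,\polytope{R}$ non-homothetic to $\polytopeP$ --- then a fortiori it has no such decomposition inside $\DCZG$, so $\polytopeP$ spans a ray of $\DCZG$. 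Hence it suffices to build, for every $n\ge 4$, a Minkowski-indecomposable generalized permutahedron $\polytopeP\subset\R^n$ with $\dim\polytopeP=n-1$ whose \emph{edge-direction graph} $G$ (the graph on $[n]$ carrying an edge $\{i,j\}$ whenever $\polytopeP$ has an edge parallel to $\b e_i-\b e_j$) is $K_5$-free but contains a $K_4$. The case $n=4$ is already in hand: the octahedron $\HypSimplTwo[4]$ is a deformation of $\ZG[K_4]$, its edge-direction graph is $K_4$, all its $2$-faces are triangles, and it is one of the three-dimensional rays of $\DCZG[K_4]$ recorded at the start of this section.

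For general $n$ I would proceed recursively, raising the dimension by one at each step while attaching a single new $K_4$ glued along a triangle, so that the edge-direction graph becomes a ``chain'' of $K_4$'s sharing triangles; this is $K_5$-free (any clique meets at most one block) and still contains a $K_4$. Extremality of the resulting $\b\ell\in\DCe$ is to be certified by the rank of the polygonal face equations: $\polytopeP=\polytopeQ_{\b\ell}$ spans a ray exactly when, restricted to the support of $\b\ell$, those equations have corank one. Concretely, triangular $2$-faces force their three Edge-lengths to be equal (this is the same mechanism making each $\simplex_K$ indecomposable), and hexagonal $2$-faces impose the step relation \Cref{eqn:hexagonal_face_eqn}; one then shows that the graph on the Edges of $\polytopeP$ linking two Edges whenever a common $2$-face ties their lengths is connected, so that all lengths are determined by a single scalar. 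The gluing triangles are precisely what should propagate this length-tying relation from one $K_4$ block to the next.

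The hard part will be reconciling two opposing requirements. On one hand a full-dimensional indecomposable deformation needs enough edge directions that the length-tying graph stays connected across all $n$ dimensions; on the other hand $K_5$-freeness forbids five pairwise-adjacent vertices and thereby sharply restricts those directions. The delicate point is to design the dimension-raising gadget so that (i) every newly created $2$-face is a triangle or a hexagon --- never a parallelogram, which would \emph{decouple} the lengths and split off a summand --- and (ii) the new vertex contributes edges lying in one $K_4$ only. Establishing this uniformly in $n$, rather than case by case, is the main obstacle; the computations behind \Cref{exmp:counter-example1,exmp:counter-example2} are meant to provide the smallest instances of such a gadget and the evidence that a propagating, $K_5$-free family indeed exists.
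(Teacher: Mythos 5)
The statement you are trying to prove is stated in the paper as an open conjecture: the paper offers only computational evidence (verification up to $n \leq 8$ for the cyclic-polytope graphs of \Cref{exmp:counter-example1}) and explicitly says that no explicit construction is known. Your submission does not close this gap. The reduction in your first paragraph is sound --- by \Cref{rmk:knowing_if_deformation}, a Minkowski-indecomposable polytope whose edge directions are $\b e_i - \b e_j$ for $\{i,j\}\in E(G)$ does span a ray of $\DCZG$ --- and the base case $n=4$ via the octahedron $\HypSimplTwo[4]$ is correct. But for every $n\geq 5$ the argument defers to a ``dimension-raising gadget'' that is never constructed, and whose existence \emph{is} the content of \Cref{conj:with_K4}. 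A reduction of the conjecture to an unproven construction is a research plan, not a proof.

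Moreover, two of your guiding principles are contradicted by the paper's own data. First, your proposed family (a chain of $K_4$'s, each glued to the previous one along a triangle) coincides at $n=5$ with the wedge of two $K_4$'s, but the closely related family of \Cref{exmp:counter-example2} --- all $K_4$'s glued on one common triangle, equally $K_5$-free and containing $K_4$ --- has \emph{no} ray of dimension $n-1$ already at $n=6$. So ``attach a $K_4$ along a triangle'' does not by itself propagate high-dimensional indecomposability, and you give no argument distinguishing your chain from that failed wedge; the paper's surviving candidates are instead the graphs of $\cyc(3,n)$. Second, your design requirement (i), that no new $2$-face may be a parallelogram because it ``would decouple the lengths and split off a summand,'' is false: the pyramid over a square is listed in the paper as one of the $3$-dimensional rays of $\DCZG[K_4]$, hence indecomposable, despite having a parallelogram $2$-face. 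A parallelogram face only forces opposite Edges to have equal length (\Cref{ssec:2d_ZG}); it does not by itself produce a Minkowski summand, so this constraint is neither necessary for your gadget nor, most likely, satisfiable for large $n$ under $K_5$-freeness. The honest status of your text is a strategy whose two key steps (a uniform gadget, and a correct local criterion for indecomposability) both remain open.
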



\begin{example}\label{exmp:counter-example1}
\textit{Good candidates for proving the conjecture: graph of $\cyc(3, n)$}

Consider the graph $G_n$ on $n$ nodes obtained by taking a path $P_{n-2}$ on $n-2$ nodes $1,\dots,n-2$ and a path $P_2$ on $2$ nodes $0$ and $n-1$, and adding all the arcs $(0, i)$ and $(i, n-1)$ for $i\in [n-2]$.
This is the graph of the 3-dimensional cyclic polytope on $n$ vertices.
The graph $G_n$ is $K_5$-free, $\ZG[G_n]$ is an $(n-1)$-dimensional zonotope, and a computer check allows to prove that there exists a polytope $\polytopeP$ associated to a ray $\DCZG[G_n]$ with $\dim \polytopeP = n-1$, for all $n \leq 8$.

In particular, for $n = 5$, this gives a $K_5$-free graph with a $4$ dimensional Minkowski indecomposable deformation of $\ZG[G_n]$.
Note that there seem to be many such polytopes $\polytopeP$.
For instance, for $n = 8$, there are $6 450$ polytopes $\polytopeP$ associated to rays of $\DCZG[G_n]$ with $\dim\polytopeP = 7$.
Moreover, if we compute the number of polytopes $\polytopeP$ associated to a rays of $\DCZG[G_n]$ with $\dim\polytopeP = d$, we get:
\begin{center}
\begin{tabular}{c|ccccccc}
    $n\downarrow \,\backslash\, d\to$ & 1 & 2 & 3 & 4 & 5 & 6 & 7 \\ \hline
    4 & 6 & 8 & 23 & & & & \\
    5 & 9 & 14 & 46 & 96 & & & \\
    6 & 12 & 20 & 69 & 192 & 378 & & \\
    7 & 15 & 26 & 92 & 288 & 756 & 1542 & \\
    8 & 18 & 32 & 115 & 384 & 1134 & 3084 & 6450 \\
    \vdots &\vdots &\vdots &\vdots &\vdots &\vdots &\vdots &\vdots \\
    $n$ & $3(n-2)$ & $6n-16$ & $23(n-3)$ & $96(n-4)$ & $378(n-5)$ & $1542(n-6)$ & ? \\
\end{tabular}
\end{center}

The first lines of this table contain explicit values we computed, while the last line contains the formula we conjecture from these data.
These formulas seem particularly nice even though we do not have any clue on how to prove them.
What is immediate, is that $3(n-2)$ is the number of arcs of $G_n$, and $6n-16$ is twice the number of triangles of $G_n$.
It is easy to prove that $\simplex_e$ for $e$ an arc of $G_n$, and $\pm\simplex_t$ for $t$ a triangle of $G_n$ are associated to rays of $\DCZG[G_n]$ and are the only polytopes $\polytopeP$ associated to rays of $\DCZG[G_n]$ with $\dim\polytopeP \leq 2$.
The hard task starts in dimension 3.
\end{example}

\begin{example}\label{exmp:counter-example2}
\textit{Bad candidates: wedge sum of multiple $K_4$}

Consider the graph $G_n$ on $n$ nodes obtained by gluing $n-3$ copies of $K_4$ on a common triangle.
The maximal cliques of $G_n$ are $\bigl(1, 2, 3, k\bigr)$ for $k \in\{4,  n\}$.
Then, $G_n$ is $K_5$-free and $\ZG[G_n]$ is an $(n-1)$-dimensional zonotope.
A computer check allows to prove that there exists a polytope $\polytopeP$ associated to a ray $\DCZG[G_n]$ with $\dim \polytopeP = n-1$, for $n \in\{3, 4, 5\}$.
But for $n = 6$, there is none!
\end{example}


\section{Open questions}


\paragraph{$\DCZG$ for not $K_4$-free graphs}
If $G$ is not $K_4$-free, it seems there are rays of $\DCZG$ associated to polytopes of dimension $\dim\ZG$.
Our experiments show \Cref{conj:with_K4} holds for $n \leq 8$.
To prove of this conjecture one should construct high dimensional indecomposable polytope whose normal fan is supported by the graphical arrangement $\c H_G$.
Besides, it is interesting to determine, the \defn{Minkowski dimension $\dim_\text{M} \ZG$} of the graphical zonotope of $G$, \ie the maximal dimension of a polytope associated to a ray of $\DCZG$.
We have $\Omega(G) \leq 1+\dim_\text{M} \ZG \leq |V|$.
If $G$ is $K_3$-free, then $\dim_\text{M}\ZG = 1$.
If $G$ is $K_4$-free, then $\dim_\text{M}\ZG = 2$.

\paragraph{$f$-vector of $\DCZG$ for $K_4$-free graph}
\Cref{thm:DCZG_K4-free_graphs,cor:DCZG_graph_is_bi-pyramid_graph} allows us to compute the values $f_1 = |E| + 2|T|$, and $f_2 = \binom{f_1}{2} - |T|$.
As we provide a triangulation of $\DCZG$, one may hope for working out the full $f$-vector of $\DCZG$ in this case, even though this is far from being immediate.
We did that for the bi-triangle graph in \Cref{exmpl:DCZG_two_triangles}, but the general case is open.

Besides, one may use tools from extremal graph theory such as the bound on the number of triangles of a $K_4$-free graph on $n$ vertices proven by Eckhoff \cite{Eckhoff-TriangleInK4freeGraph}, in order to produce faces of the submodular cone whose $f$-vector satisfy prescribed inequalities.

\paragraph{Generalized permutahedra of low Minkowski dimension}
The most surprising fact about $\DCZG$ for $K_4$-free graphs may not be that we are able to triangulate it efficiently, but that all the rays of $\DCZG$ are associated to 1- and 2-dimensional polytopes.
More than the $K_4$-freeness of $G$, the main reason is that the hexagonal faces in $\ZG$ are ``separated enough'' and can be dealt with independently.
We do not precise this here, but note that in the 3-dimensional permutahedron $\ZG[K_4]$ some hexagonal faces share an edge, and $\DCZG[K_4]$ has rays associated to 3-dimensional polytopes.

Hence, the author thinks it would be fruitful to properly define a notion of ``generalized permutahedra having \textit{well-separated} pentagons and hexagons".
\Cref{lem:delta_t} could extend to such generalized permutahedra $\polytopeP$, and thus one may prove that $\dim_\text{M} \polytopeP \leq 2$.
Said differently:
characterize the (polytopes associated to the) faces of the submodular cone with rays $\simplex_X$ for $|X| \in \{2, 3\}$.

Firstly, one can try to construct a class of nestohedra.
Proposition 3.29 of \cite{PPP2023Nesto} characterizes the nestohedra $\polytope{N}$ for which $\DC[\polytope{N}]$ is simplicial, and gives the rays of $\DC[\polytope{N}]$ in this case, using the same method as \cite[Corollary 2.9]{PPP2023gZono}: what about nestohedra $\polytope{N}$ with small $\dim_\text{M} \polytope{N}$?


\bibliographystyle{alpha}
\bibliography{Biblio}
\label{sec:biblio}

\end{document}